\theoremstyle{plain}
\newtheorem{thm}{Theorem}
\newtheorem{definition}[thm]{Definition}
\newtheorem{cor}[thm]{Corollary}
\newtheorem{lem}[thm]{Lemma}
\newtheorem{rem}[thm]{Remark}
\newtheorem{prop}[thm]{Proposition}
\newtheorem{ex}[thm]{Example}
\newtheorem{thmintro}{Theorem}
\newcommand{\T}{\mathbb{T}}
\newcommand{\R}{\mathbb{R}}
\newcommand{\C}{\mathbb{C}}
\newcommand{\Z}{\mathbb{Z}}
\newcommand{\A}{\mathbb{A}}
\newcommand{\G}{\mathbb{G}}
\newcommand{\N}{\mathbb{N}}
\newcommand{\cA}{\mathcal{A}}
\newcommand{\Oh}{\mathcal{O}}
\newcommand{\V}{\mathcal{V}}
\newcommand{\W}{\mathcal{W}}
\newcommand{\del}{\partial}
\newcommand{\delbar}{\overline{\partial}}
\newcommand{\coker}{\operatorname{coker}}
\newcommand{\pr}{\operatorname{pr}}
\newcommand{\Id}{\operatorname{Id}}
\newcommand{\Pro}{\mathbb{P}}
\DeclareMathOperator{\an}{an}
\DeclareMathOperator{\gr}{gr}
\DeclareMathOperator{\Fil}{Fil}
\DeclareMathOperator{\Gra}{Gra}
\DeclareMathOperator{\Rs}{Rs}
\DeclareMathOperator{\Bun}{Bun}
\DeclareMathOperator{\Rep}{Rep}
\DeclareMathOperator{\End}{End}
\DeclareMathOperator{\Hom}{Hom}
\DeclareMathOperator{\Spec}{Spec}
\newcommand{\Cdot}{{\raisebox{-0.7ex}[0pt][0pt]{\scalebox{2.0}{$\cdot$}}}}
\mathchardef\mhyphen="2D
\let\oldabstract\abstract
\let\oldendabstract\endabstract
\renewenvironment{abstract}
{%
	{\list{}{\addtolength{\leftmargin}{2em} 
			\listparindent 0em%
			\itemindent    \listparindent%
			\rightmargin   \leftmargin%
			\parsep        \z@ \@plus\p@}%
		\item\relax}%
	{\endlist}%
	\oldabstract}
{\oldendabstract}
\title{Toric Vector Bundles: GAGA and Hodge Theory}
\author{Jonas Stelzig}
\begin{document}
\maketitle
\begin{abstract}
We prove a GAGA-style result for toric vector bundles with smooth base and give an algebraic construction of the Fr\"olicher approximating vector bundle that has recently been introduced by Dan Popovici using analytic techniques.
\end{abstract}

\section{Introduction}

A toric variety over a field $k$ is an algebraic variety $X$ over $k$ with a $\G_m^n$-action that has a dense open orbit on which the group acts simply transitively. A vector bundle on such $X$ is called toric if it is equipped with a $\G_m^n$-action s.t. the projection is an equivariant map.\\

Toric varieties and vector bundles are an important source of examples in algebraic geometry. Just as normal toric varieties can be studied by combinatorial data, toric vector bundles (and also more general classes of equivariant sheaves) on a given normal toric variety $X$ have been classified in terms of linear-algebra-data (roughly as vector spaces and filtrations with certain compatibility conditions), c.f. \cite{kaneyama_equivariant_1975}, \cite{kaneyama_torus-equivariant_1988}, \cite{klyachko_equivariant_1990}, \cite{klyachko_vector_1991}, \cite{perling_graded_2004}.\\

If $k=\C$, for every toric variety $X$ over $\C$ one also has a natural notion of holomorphic toric vector bundles over $X^{\an}$, the latter meaning (the set of complex points of) $X$ seen as a complex analytic space. One obtains an analytification functor:
\[
\text{toric vector bundles on }X\longrightarrow\text{ holomorphic toric vector bundles on }X^{\an}
\]
The first main result of this article is that for smooth toric varieties, this functor is an equivalence of categories:

\begin{thmintro}
For a smooth toric variety $X$ over $\C$, analytification induces an equivalence of categories between algebraic toric vector bundles on $X$ and holomorphic toric vector bundles on $X^{\an}$. The same is true for toric vector bundles with (flat or arbitrary) equivariant connections.
\end{thmintro}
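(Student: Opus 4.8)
The plan is to localize the statement on the torus-invariant affine charts, where equivariance rigidifies everything, and then to reassemble by descent; the one genuinely geometric input is that a torus-equivariant bundle on a space carrying a contracting $\G_m$-action is pulled back from the fixed locus, and this does not see the difference between the algebraic and the holomorphic world. Concretely: since $X$ is smooth, its fan $\Sigma$ consists of smooth cones, so the invariant affine cover $\{U_\sigma\}_{\sigma\in\Sigma}$ has each $U_\sigma\cong\A^{r}\times\G_m^{n-r}$ with $\G_m^n$ acting in the standard way, and $U_\sigma\cap U_\tau=U_{\sigma\cap\tau}$ is again a member of the cover. Algebraic toric bundles, holomorphic toric bundles, and either of these equipped with an equivariant connection each form a stack for the Zariski, resp.\ analytic, topology of the corresponding cover; analytification is a morphism between these stacks and $\{U_\sigma^{\an}\}$ covers $X^{\an}$, so it suffices to prove that analytification is an equivalence over every $U:=U_\sigma$, the compatibility with restriction to smaller charts being automatic since the functor in question is analytification itself.

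The heart of the matter is a splitting lemma over $U$: every holomorphic toric bundle on $U^{\an}$ is a finite direct sum of holomorphic toric line bundles, and every holomorphic toric line bundle on $U^{\an}$ has trivial underlying holomorphic bundle — hence is classified by its weight in $M/(\{0\}^r\times\Z^{n-r})\cong\Z^r$, exactly as in the algebraic case (compare \cite{klyachko_equivariant_1990}, \cite{perling_graded_2004}), compatibly with analytification. Triviality of the underlying bundle holds because $U^{\an}$ is Stein, so its holomorphic Picard group is $H^2(U^{\an},\Z)$ and restricts injectively to that of the open orbit $T^{\an}$, whereas every $T$-equivariant holomorphic bundle is equivariantly trivial over the open orbit by descent along $T^{\an}\to\mathrm{pt}$. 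For the splitting I would induct on $r$: writing $U^{\an}=\C\times U'^{\an}$ with the first $\G_m$-factor contracting the $\C$-factor onto its fixed hypersurface $Z=\{0\}\times U'^{\an}$, I decompose $\Gamma(U^{\an},\mathcal E)$ into weight spaces for that $\G_m$ (each section converges, locally uniformly, to the sum of its weight components via its Taylor expansion in the contracted coordinate, and the negative-weight spaces vanish because a section vanishing to infinite order along $Z$ is zero); then, using that restriction to $Z$ is surjective on global sections (Cartan's Theorem B over the Stein manifold $U'^{\an}$) and that $\mathcal E|_Z$ splits into toric line bundles by the inductive hypothesis, I lift a homogeneous basis of $\mathcal E|_Z$ to homogeneous sections of $\mathcal E$ and check that the resulting equivariant map $\pi^{*}(\mathcal E|_Z)\to\mathcal E$ is an isomorphism, its determinant being a nowhere-zero $\G_m$-invariant holomorphic function and hence constant along the $\C$-factor. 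Given the splitting, essential surjectivity over $U$ is immediate, and full faithfulness over $U$ reduces to the equality $\Gamma(U,\Oh)_m=\Gamma(U^{\an},\Oh)_m$ in each fixed weight $m$, where both sides are one-dimensional or zero.

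To globalize, full faithfulness over $X$ follows from the affine case by the sheaf property: for an equivariant coherent sheaf $\mathcal G$ the weight-$m$ part of $\Gamma(X,\mathcal G)$ is the kernel of $\prod_\sigma\Gamma(U_\sigma,\mathcal G)_m\to\prod_{\sigma,\tau}\Gamma(U_{\sigma\cap\tau},\mathcal G)_m$, all of these spaces are finite-dimensional, and for $\mathcal G=\mathcal{H}om(\mathcal E,\mathcal F)$ — whose restriction to each $U_\sigma$ splits — they coincide with their analytic analogues; essential surjectivity over $X$ then follows by gluing the per-chart algebraic bundles along their transition isomorphisms, which are algebraic by full faithfulness over the overlaps $U_{\sigma\cap\tau}$. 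For equivariant connections, the algebraic, resp.\ holomorphic, equivariant connections on a toric bundle $\mathcal E$ form a torsor (once nonempty) under the group of equivariant $\Oh$-linear maps $\mathcal E\to\mathcal E\otimes\Omega^1_X$, i.e.\ the weight-$0$ global sections of the equivariant sheaf $\mathcal E^{\vee}\otimes\mathcal E\otimes\Omega^1_X$, which is the same in both settings; the obstruction to nonemptiness is the equivariant weight-$0$ Atiyah class in a first cohomology group that the same \v{C}ech computation over $\{U_\sigma\}$ identifies with its analytic counterpart, so the algebraic torsor is nonempty exactly when the holomorphic one is and is then canonically identified with it. Flatness transports both ways because $\nabla^{2}$ is $\Oh$-linear and analytification is faithful on sheaves of homomorphisms.

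The delicate point is the holomorphic splitting lemma — specifically the convergence of the weight expansion of a holomorphic section and, above all, the verification that $\pi^{*}(\mathcal E|_Z)\to\mathcal E$ is a \emph{global} isomorphism; once that lemma is in place, the reduction to the charts, the sheaf-theoretic globalization, and the connection bookkeeping are all routine.
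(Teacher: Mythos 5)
Your overall architecture --- reduce to the invariant affine charts $U_\sigma\cong\C^r\times(\C^\times)^{n-r}$, prove the equivalence there, and glue --- is the same as the paper's: Theorem \ref{toric GAGA} is deduced from Proposition \ref{toric GAGA affine case} in exactly this way, and fullness on a chart is likewise reduced there to computing invariant holomorphic sections of $\Hom(\V,\W)$, which turn out to be monomial, as in your weight-by-weight comparison. Where you genuinely diverge is essential surjectivity over a chart. The paper does not classify holomorphic toric bundles directly: it runs through the diagram $(\ast)$, using equivariant contractibility of $\C^n$ for the smooth and continuous cases and then quoting the equivariant Oka principle of Heinzner--Kutzschebauch on Stein spaces to match holomorphic with continuous isomorphism classes. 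You instead propose a direct inductive splitting of a holomorphic toric bundle into toric line bundles by contracting one coordinate at a time --- precisely the ``direct derivation of the classification of isomorphism classes in $\Bun^{\operatorname{hol}}(\C^n,(\C^\times)^n)$'' that the paper's remark after Proposition \ref{toric GAGA affine case} explicitly asks for. If your lemma holds up, this is a real gain in self-containedness; your treatment of connections as a torsor under weight-zero equivariant $\Oh$-linear maps $\mathcal E\to\mathcal E\otimes\Omega^1_X$, with the Atiyah class as obstruction, is also more structural than the paper's explicit computation $\nabla=d+\Omega$ on a trivialized bundle (though it proves the same thing).

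The splitting lemma, however, is exactly where the content sits, and as stated it contains an error. The claim that ``the negative-weight spaces vanish'' is false once the isotropy representation along $Z$ is nontrivial: already for the equivariant line bundle $\Oh_{\A^1}(r)$ on $\C$, the weights occurring in the space of global sections form the half-line $\{-r,-r+1,\dots\}$, whose endpoint is the isotropy weight of the fibre at $0$ and which contains negative integers for $r>0$. What is true, and what you actually need, is only that the weights are bounded on one side by the isotropy weights of $\mathcal E|_Z$; and your justification (``a section vanishing to infinite order along $Z$ is zero'') is the identity-theorem argument for injectivity of the weight decomposition, not for any vanishing. Relatedly, ``Taylor expansion in the contracted coordinate'' presupposes an action-compatible trivialization, which is what you are trying to build; the weight projections must be defined via the group action, e.g.\ by the contour integral $s_m=\frac{1}{2\pi i}\oint_{|\lambda|=1}\lambda^{-m-1}(\lambda\cdot s)\,d\lambda$, whose convergence to $s$ when summed over $m$ is the Laurent expansion of the holomorphic map $\lambda\mapsto\lambda\cdot s$. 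The good news is that your induction survives without the offending claim: you only need to project a Cartan-B lift of each homogeneous generator of $\mathcal E|_Z$ onto the appropriate weight component (the projection still restricts correctly to $Z$), and the resulting equivariant map $\pi^{*}(\mathcal E|_Z)\to\mathcal E$ is an isomorphism because its degeneracy locus is a closed invariant subset disjoint from $Z$ while every orbit closure in $U$ meets $Z$ --- a safer argument than your determinant phrasing, since the determinant is a priori a section of a line bundle known to be nonzero only along $Z$. With these repairs your route goes through; as written, the key lemma is a plausible sketch rather than a proof.
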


Of course, there is the known GAGA-principle by Serre, asserting an equivalence of the categories of coherent sheaves on a complex projective variety and its analytification. The above Theorem is not a formal consequence of this. For instance, $X$ is not assumed to be projective and even for projective $X$, the equivariant structure is sheaf-theoretically described by an isomorphism of sheaves over $\G_m^n\times X$, which is not projective.\\

We do not need the full classification of toric vector bundles. In fact, it is enough for our purposes to consider algebraic toric vector bundles on affine spaces and we include a brief but largely self-contained treatment of these. A key notion is the Rees-bundle construction, which associates to (suitable) multifiltered vector spaces $(V,F_1,...,F_n)$ a toric vector bundle $\xi_{\A^n}(V,F_1,...,F_n)$ on $\A^n$.\\

Toric vector bundles have also been studied in connection with Hodge theory, see e.g. \cite{deninger_$gamma$-factors_1991}, \cite{simpson_hodge_1997}, \cite{simpson_mixed_1997}, \cite{deninger_$gamma$-factors_2001}, \cite{penacchio_structures_2003}, \cite{goncharov_hodge_2008}, \cite{kapranov_real_2012}, \cite{penacchio_mixed_2011}, in part apparently independent of the classification. A basic idea is that, since a Hodge structure is a multifiltered vector space, one may apply Rees-bundle like constructions to it and obtain a toric vector bundle.\\

Recently in \cite{popovici_adiabatic_2019}, D. Popovici introduced the so-called Fr\"olicher approximating bundle, in order to give a more conceptual proof of his earlier result that deformation limits of Moishezon manifolds are again Moishezon. This construction associates to every compact complex manifold $X$ and every integer $k$ a holomorphic vector bundle $\cA^k$ on $\C$ (thus necessarily trivial) which interpolates between the de-Rham cohomology and the degenerating page of the Fr\"olicher spectral sequence. I.e., it comes equipped with distinguished isomorphisms $\cA^k(h)\cong H_{dR}^k(X,\C)$ for $h\neq 0$ and $\cA^k(0)\cong\bigoplus_{p+q=k}E_r^{p,q}$. The construction is analytic, in particular it involves choosing a metric and introducing Laplace-type pseudo-differential operators for the higher pages of the Fr\"olicher spectral sequence. We show here that this bundle can be seen as a special case of the Rees-bundle construction, thereby giving a purely algebraic description.

\begin{thmintro}
For $X$ a compact complex manifold and $\cA^k$ the $k$-th Fr\"olicher approximating bundle, there is a canonical isomorphism
\[
\cA^k\cong\xi_{\A^1}(H_{dR}^k(X,\C),F)^{\an},
\]
where $F$ denotes the Hodge-filtration.
\end{thmintro}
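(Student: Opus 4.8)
\emph{Strategy.} The plan is to realise $\cA^k$ as the torsion-free quotient of the cohomology of a holomorphic family of complexes whose algebraisation is exactly the Rees complex of the Hodge filtration. Write $(A^{\bullet,\bullet}(X),\del,\delbar)$ for the Dolbeault bicomplex of $X$, $(A^\bullet,d)$ for the total de Rham complex, and equip it with the Hodge filtration $F^pA^\bullet=\bigoplus_{p'\ge p}A^{p',\bullet}$. A direct computation identifies the Rees complex of this filtered complex with $\big(A^{\bullet,\bullet}(X)\otimes_\C\C[t],\ t\del+\delbar\big)$, which after the renaming $h=t$ is Popovici's deformation $d_h:=h\del+\delbar$ (in suitable normalisations): $d_h^2=0$, $d_0=\delbar$, and for $h\neq0$ the automorphism $\phi_h$ of $A^\bullet$ multiplying $A^{p,q}$ by $h^{p}$ satisfies $\phi_h d=d_h\phi_h$, hence gives a canonical isomorphism $H^k_{dR}(X,\C)\xrightarrow{\ \sim\ }H^k(A^\bullet,d_h)$. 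I would let $\cH^k$ be the $k$-th cohomology sheaf of the complex of Fréchet sheaves $\big(A^{\bullet,\bullet}(X)\otimes_\C\Oh_\C,\ t\del+\delbar\big)$ on $\C=\A^{1,\an}$: it is locally free over $\C^*$ — there $d_h$ is conjugate to $d$, so $\cH^k|_{\C^*}\cong H^k_{dR}(X,\C)\otimes\Oh_{\C^*}$ — and coherent near $0$, being the cohomology of a holomorphic family of elliptic complexes there, so its torsion is supported at the origin.

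The algebraic half of the argument is the canonical isomorphism
\[
\cH^k/(\mathrm{torsion})\ \cong\ \xi_{\A^1}\big(H^k_{dR}(X,\C),\,F\big)^{\an}.
\]
For any bounded filtered complex the cohomology of the Rees complex, taken modulo torsion, is the Rees complex of the cohomology with its induced filtration: the surjection sending the class of $\sum_p z_p\,t^{-p}$ (with each $z_p$ a cocycle in $F^p$) to $\sum_p[z_p]\,t^{-p}$ has precisely the torsion submodule as kernel. Here the induced filtration on $H^k_{dR}(X,\C)$ is the Hodge filtration, so the special fibre of the right-hand side is $\gr_F H^k_{dR}(X,\C)=\bigoplus_{p+q=k}E^{p,q}_\infty=\bigoplus_{p+q=k}E^{p,q}_r$ — the Fr\"olicher spectral sequence of the compact manifold $X$ stabilising at the finite page $E_r$ because $E_1$ is finite-dimensional — while over $\C^*$ the right-hand side is the trivial bundle of the previous paragraph.

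The analytic half is to identify $\cA^k$ with $\cH^k/(\mathrm{torsion})$. A holomorphic section of $\cA^k$ is, by Popovici's construction, a holomorphic family of forms that is $d_h$-harmonic for $h\neq0$ and $\widetilde\Delta_r$-harmonic at $h=0$; in particular it is a holomorphic family of $d_h$-closed forms, so passing to $d_h$-cohomology classes defines a canonical morphism $\cA^k\to\cH^k$. It is injective (a harmonic family that is a holomorphic family of $d_h$-coboundaries vanishes for $h\neq0$, hence everywhere), so, $\cA^k$ being locally free, it factors through an inclusion $\cA^k\hookrightarrow\cH^k/(\mathrm{torsion})$ of locally free sheaves of rank $b_k=\dim H^k_{dR}(X,\C)$. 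Over $\C^*$ it is an isomorphism, and the induced map of fibres at $0$, from $\cA^k(0)=\ker\widetilde\Delta_r=\bigoplus_{p+q=k}E^{p,q}_r$ to $\big(\cH^k/(\mathrm{torsion})\big)(0)=\gr_F H^k_{dR}(X,\C)$, is injective too: if the class of a harmonic family becomes divisible by $t$ modulo torsion, writing the resulting relation out at $h=0$ exhibits its value there as a sum of a $\del$-coboundary of a $\delbar$-closed form and a $\delbar$-coboundary, hence as a class vanishing already on the $E_2$-page and a fortiori on $E_r$. Both fibres having dimension $b_k$, Nakayama upgrades the inclusion to an isomorphism near $0$; composing with the algebraic identification yields the canonical isomorphism $\cA^k\cong\xi_{\A^1}(H^k_{dR}(X,\C),F)^{\an}$, which on fibres restricts to Popovici's isomorphisms $\cA^k(h)\cong H^k_{dR}(X,\C)$ for $h\neq0$ and $\cA^k(0)\cong\bigoplus_{p+q=k}E^{p,q}_r=\gr_F H^k_{dR}(X,\C)$.

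The step I expect to be the real obstacle, and the only place genuinely requiring \cite{popovici_adiabatic_2019}, is the analytic input at the origin: that $\cH^k$ is coherent near $0$ and, above all, that Popovici's higher-page pseudodifferential Laplacian $\widetilde\Delta_r$ carves out of $\cH^k$ exactly a locally free extension of $\cH^k|_{\C^*}$ with fibre $\bigoplus_{p+q=k}E^{p,q}_r$ at $0$ — equivalently, that $\cA^k=\cH^k/(\mathrm{torsion})$, which is the whole reason the higher-page Laplacians had to be introduced. An alternative that sidesteps $\cH^k$: the $\phi_\mu$ intertwine $d_h$ with $d_{\mu h}$ and act on each $A^{p,q}$ by a scalar, hence equip $\cA^k$ with a $\G_m$-equivariant structure making it a holomorphic toric vector bundle on $\A^{1,\an}$; the GAGA equivalence for toric vector bundles descends it to an algebraic toric vector bundle on $\A^1$, which the classification recalled above presents as $\xi_{\A^1}$ of a filtered vector space, necessarily $(H^k_{dR}(X,\C),F)$, the filtration being read off — as always for the Rees construction — from the pole orders at the origin of the sections that extend there.
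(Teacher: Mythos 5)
Your proposal follows essentially the same route as the paper: identify the Rees complex of the Hodge-filtered de Rham complex with the family $(C^\infty_\Cdot(X,\C)\otimes\Oh_\C,\ z\del+\delbar)$, use that cohomology of the Rees complex modulo torsion is the Rees bundle of $H^k_{dR}$ with its induced (Hodge) filtration, and then map $\ker\tilde{\Delta}_z$ into this quotient and conclude from Popovici's fibre identifications $(\ast)$. Your extra care at the special fibre (injectivity plus Nakayama) only spells out what the paper leaves implicit, so the argument is correct and matches the paper's proof.
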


\section{Preliminaries}

This section collects some basic notions and result from more general theory which are needed later on. No claim to originality is made here: All results (except possibly for mistakes on the author's behalf) are contained in one or more of \cite{kaneyama_equivariant_1975}, \cite{kaneyama_torus-equivariant_1988}, \cite{klyachko_equivariant_1990}, \cite{klyachko_vector_1991}, \cite{perling_graded_2004}, \cite{deninger_$gamma$-factors_1991}, \cite{simpson_hodge_1997}, \cite{simpson_mixed_1997}, \cite{deninger_$gamma$-factors_2001}, \cite{penacchio_structures_2003}, \cite{goncharov_hodge_2008}, \cite{kapranov_real_2012}, \cite{penacchio_mixed_2011}.
\subsection{Definitions}
Throughout all the section, we fix an algebraically closed field $k$ of characteristic zero and denote by $G$ an algebraic group over $k$, which will soon be set to be the algebraic $n$-torus $\G_m^n$. We first repeat some standard definitions. Even though everything is formulated for algebraic varieties, the reader may note that, mutatis mutandis, most of the definitions make sense in the holomorphic, smooth or even continuous setting.
\begin{definition}
A $G$-variety is an algebraic variety $X$ over $k$ together with an action $\rho_X:G\times X\longrightarrow X$ of the group $G$. If $G=\G_m^n$ and there is a dense open orbit on which it acts simply transitively, $(X,\rho_X)$ is called a \textbf{toric variety}. 
\end{definition}
\begin{ex}
Any variety is a $G$-variety for the trivial action. The varieties $\G^m\times \A^{n-m}$ for the natural action of $G$ by multiplication are toric. 
\end{ex}

\begin{definition}
An \textbf{equivariant sheaf} (of $\Oh_X$-modules) on a $G$-variety $(X,\rho_X)$ is tuple $(\V,\Phi)$, where $\V$ is a sheaf and $\Phi$ is an isomorphism
\[
\Phi:\rho_X^*\V\longrightarrow \pr_2^*\V
\]
of $\Oh_{G\times X}$-modules that satisfies the cocycle condition
\[
\pr_{23}^*\Phi\circ (\Id_G\times\rho_X)^*\Phi=(\mu_G\times\Id_X)^*\Phi,\label{cocycle-condition}
\]
where $\pr_{23},\Id_G\times\rho_X,\mu_G\times\Id_X$ are maps $G\times G\times X\longrightarrow G\times X$ given by projection, action and multiplication respectively.
\end{definition}
One may check that if $\V$ is a coherent and locally free, i.e. the sections of a vector bundle $E\longrightarrow X$, this definition is equivalent to requiring $E$ to be a $G$-variety s.t. the action commutes with the projection.
\begin{ex}
Let $\Omega_X$ be the sheaf of K\"ahler differentials. Recall that for any product $X\times Y$ there is a canonical identification $\Omega_{X\times Y}\cong \pr_X^*\Omega_X\oplus\pr_Y^*\Omega_Y$. As a consequence one may equip $\Omega_{X}$ with an action via
\[
\Phi_{\Omega}:\rho_X^*\Omega_X\longrightarrow\Omega_{G\times X}\cong\pr_G^*\Omega_G\oplus\pr_X^*\Omega_X\longrightarrow\pr_X^*\Omega_X,
\]
where the first map is pullback via $\rho_X$ and the second map is projection.
\end{ex}

\begin{definition}
An \textbf{equivariant connection} on an equivariant sheaf $(\V,\Phi)$ on a toric variety $(X,\rho_X)$ is a connection $\nabla:\V\longrightarrow\V\otimes\Omega_X$ such that the following diagram commutes:

\[
\begin{tikzcd}
\V\ar[rrr,"\nabla"]\ar[d,"\Phi^{ad}"]&&&\V\otimes\Omega_{X}\ar[d,"\Phi^{ad}\otimes\Phi_{\Omega_{X}}^{ad}"]\\
{\rho_X}_*\pr_X^*\V\ar[rrr,"{\rho_X}_*(\pr_X^*\!\!\nabla)_X"]&&&{\rho_X}_*(\pr_X^*\V\otimes\pr_X^*\Omega_{X})
\end{tikzcd}
\]
Here, $(\pr_X^*\!\!\nabla)_X$ denotes the composite of $\pr^*_X\nabla:\pr_X^*\V\longrightarrow\pr_X^*\V\otimes \Omega_{G\times X}$ with $\Id\times{res}$ where $res:\Omega_{G\times X}\longrightarrow \pr_X^*\Omega_X$ is projection.
\end{definition}

On affine schemes, the above notions can, as usual, be translated into commutative algebra and it is this description that will be used later on in this section.
\begin{prop}
The global sections functor yields an equivalence of categories between:
\begin{enumerate}
\item Quasi-coherent equivariant sheaves on an affine toric variety $X=\Spec A$ and $n$-graded $A$-modules.
\item Equivariant vector bundles with connection on a toric variety $X=\Spec A$ and $n$-graded $A$ modules $M$ which are locally free of finite rank and equipped with a map
\[
\nabla:M\longrightarrow M\otimes \Omega_A,
\]
that satisfies the Leibniz rule and respects the grading. Here, $\Omega_A$ denotes the module of K\"ahler differentials, the generators of which satisfy $\deg(dX_i)=\deg(X_i)$, and the right hand side has the tensor product grading.
\end{enumerate}
\end{prop}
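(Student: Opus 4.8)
The plan is to deduce both statements from Serre's equivalence $\widetilde{(-)}$ between $A$-modules and quasi-coherent sheaves on $\Spec A$, together with the representation theory of the diagonalizable group $\G_m^n$. First I would record the translation of the group action into algebra: a $\G_m^n$-action on $X=\Spec A$ is the same as a $k$-algebra coaction $A\to A\otimes_k\Oh(\G_m^n)=A\otimes_k k[\Z^n]$ satisfying coassociativity and counitality, and since $k[\Z^n]$ is the group Hopf algebra of $\Z^n$, the category of $k[\Z^n]$-comodules is canonically the category of $\Z^n$-graded $k$-vector spaces: a comodule $\psi\colon V\to V\otimes k[\Z^n]$ gives $V=\bigoplus_\lambda V_\lambda$ with $V_\lambda=\{v:\psi(v)=v\otimes t^\lambda\}$, and conversely. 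Compatibility of the coaction on $A$ with multiplication then says precisely that $A=\bigoplus_\lambda A_\lambda$ is $\Z^n$-graded; this is the only point where the hypothesis $G=\G_m^n$ enters.

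For part (1), both $X$ and $G\times X=\Spec(A\otimes_k k[\Z^n])$ are affine, so $\widetilde{(-)}$ is an equivalence for each and is compatible with pullback. Writing $\V=\widetilde M$, the affine base-change isomorphisms identify $\rho_X^*\V\cong\widetilde{(A\otimes k[\Z^n])\otimes_{\rho^\#,A}M}$ and $\pr_2^*\V\cong\widetilde{M\otimes_k k[\Z^n]}$, so an equivariant structure $\Phi$ becomes an $(A\otimes k[\Z^n])$-linear isomorphism $\phi$ between these modules. Restricting $\phi$ along $m\mapsto 1\otimes m$ yields a $k$-linear map $\psi\colon M\to M\otimes_k k[\Z^n]$, and a direct check shows that the cocycle condition on $\Phi$ is equivalent to coassociativity and counitality of $\psi$, while $(A\otimes k[\Z^n])$-linearity of $\phi$ combined with the grading of $A$ forces the induced decomposition $M=\bigoplus_\lambda M_\lambda$ to satisfy $A_\mu\cdot M_\lambda\subseteq M_{\lambda+\mu}$, i.e.\ $M$ is a graded $A$-module. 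The construction is visibly reversible, and a morphism of equivariant sheaves corresponds under $\widetilde{(-)}$ to an $A$-linear map intertwining the comodule structures, that is, to a degree-$0$ homomorphism of graded modules; hence the equivalence.

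For part (2), restricting the equivalence of (1) to vector bundles is immediate once one notes that a coherent sheaf on $\Spec A$ is locally free of finite rank exactly when the corresponding module is finitely generated and projective ($A$ is a finitely generated $k$-algebra, hence Noetherian). It remains to incorporate the connection. Under $\widetilde{(-)}$ a connection $\nabla\colon\V\to\V\otimes\Omega_X$ becomes a $k$-linear $\nabla\colon M\to M\otimes_A\Omega_A$ satisfying Leibniz. I would first check that the canonical equivariant structure $\Phi_\Omega$ on $\Omega_X$ corresponds under (1) to the grading on $\Omega_A$ with $\deg(dX_i)=\deg(X_i)$: for homogeneous $a\in A_\mu$ the coaction sends $a\mapsto a\otimes t^\mu$, whence $da\mapsto da\otimes t^\mu$, so $da$ is homogeneous of degree $\mu$. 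Then $M\otimes_A\Omega_A$ carries the tensor-product grading, and unwinding the square defining an equivariant connection — applying the dictionary ``morphism of equivariant sheaves $\leftrightarrow$ degree-$0$ map'' to $\nabla$ regarded as a $k$-linear map of the underlying graded vector spaces — shows that the square commutes if and only if $\nabla(M_\lambda)\subseteq(M\otimes_A\Omega_A)_\lambda$ for all $\lambda$, i.e.\ $\nabla$ respects the grading.

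I expect the main obstacle to be purely organizational: keeping track of the pullback functors $\rho_X^*,\pr_2^*,(\Id_G\times\rho_X)^*,(\mu_G\times\Id_X)^*$ through the affine base-change isomorphisms and confirming that the cocycle condition and the connection diagram translate exactly into coassociativity and degree-$0$-ness. The one step requiring a genuine (if brief) computation is the identification of $\Phi_\Omega$ with the grading $\deg(dX_i)=\deg(X_i)$ on $\Omega_A$, which involves tracing $\Phi_\Omega$ through the splitting $\Omega_{G\times X}\cong\pr_G^*\Omega_G\oplus\pr_X^*\Omega_X$.
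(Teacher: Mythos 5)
Your proposal is correct and is precisely the standard argument the paper alludes to: the paper omits the proof, but the grading $A^p=\{a\mid\rho^*(a)=a\otimes z^p\}$ it records is exactly the comodule-to-grading dictionary for the Hopf algebra $k[\Z^n]$ that your argument is built on. The translation of the cocycle condition into coassociativity/counitality, the identification of $\Phi_\Omega$ with the grading $\deg(da)=\deg(a)$, and the reading of the connection square as degree-zero-ness are all carried out correctly.
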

The proof is standard and we omit it. Let us just describe how the $n$-grading on $A$ is defined: The action $\rho_X$ induces a coaction $\rho^*_X:A\longrightarrow A\otimes k[z_1^{\pm 1},...,z_n^{\pm 1}]$ and for any multiindex $p=(p_1,...,p_n)\in \Z^n$, one sets 
\[A^p:=\{a\in A\mid \rho^*(a)=a\otimes z_q^{p_1}\cdot...\cdot z_n^{p_n}\}\]

\subsection{Toric vector bundles on affine and projective spaces}
It is well-known that one can further simplify the commutative algebra from the previous section and describe equivariant sheaves on normal toric varieties in terms of linear algebra. We keep notations from the previous subsection and recall how this works for the case $X=\A^n=\Spec k[z_1,...,z_n]$.\\

\textbf{Notational conventions:} We fix some natural number $n$ and denote by for any toric variety $X$ by $\Bun(X,\G_m^n)$ the category of coherent equivariant sheaves on $X$. By $\Fil^n_k$, we denote the category of finite dimensional $k$-vector spaces $(V,F_1,...,F_n)$ with $n$ separated and exhaustive descending filtrations (i.e. $F_i^P=\{0\}$ and $F_i^p=V$ for $P\gg 0 \gg p$). Given two filtered vector spaces $(V,F), (V',F')$, the tensor product $V\otimes_kV'$ is equipped with the filtration $(F\otimes F')^\Cdot=\sum_{p+q=\Cdot} F^p\otimes_k {F'}^q$ and this induces a tensor product on $\Fil^n_k$. We sometimes write simply $V$ instead of $(V,F_1,...,F_n)$ for an object in $\Fil_k^n$.\\
We write $A:=k[z_1,...,z_n]$ and $B:=k[z_1^{\pm 1},...,z_n^{\pm 1}]$. We view these as equipped with the standard $n$-grading, i.e., $z_i^p$ has $i$-th degree $p$ and zero else. For a multiindex $p=(p_1,...,p_n)\in\Z^n$, we write $|p|:=\sum_{i=1}^n p_i$ and $z^p:=z_1^{p_1}\cdot...\cdot z_n^{p_n}\in B$. For $n$ filtrations $F_i$ on some vector space, we set $F^p:= F_1^{p_1}\cap...\cap F_n^{p_n}$. Given some $\lambda\in\Z$, we use the notation $\lambda p:=(\lambda p_1,...,\lambda p_n)$ and $p\pm_i \lambda:= (p_1,...,p_{i-1},p_i\pm \lambda,p_{i+1},...,p_n)\in\Z^n$. We abbreviate $(0,...,0)$ to just $0$.\\ 
and similarly with $\geq_i$ instead of $\pm_i$. Also, for $p,r\in\Z^n$, we write $p\geq r:\Leftrightarrow p_i\geq r_i~\forall i\in\{1,...,n\}$.\\

\textbf{From filtrations to sheaves:}\\ 
Starting from a multi-filtered vector space $(V,F_1,...,F_n)$, denote by 
\[
\Rs^n(V):=F^{0}(V\otimes_k B)=\sum_{p\in\Z^n}F^pV\otimes_k z^{-p}A\subseteq V\otimes_k B
\]
the \textbf{Rees-module} associated with $V$. Since it is a submodule of $V\otimes B$, it is always torsion free. Define the (algebraic) \textbf{Rees-sheaf} $\xi_{\A^n}(V)$ to be the coherent sheaf associated to this module, with $\G_m^n$-equivariant structure corresponding to the grading. This construction is functorial.\\

It turns out that every toric vector bundle is of the form $\xi_{\A^n}(V)$. To formulate this more precisely, recall that a splitting for a set of filtrations $F_1,...,F_n$ on a vector space $V$ is a decomposition 
\[
V=\bigoplus_{p\in\Z^n} V^p\text{ s.t. }F^r_iV=\bigoplus_{\substack{p\in\Z^n\\p_i\geq r}}V^p.\] 
For $n=1,2$, one can always construct a splitting by choosing appropriate bases, but three or more filtrations may or may not be splittable, as may be seen by examples constructed from $n$ lines in $k^2$. Denoting by $\Fil_k^{n,splittable}$ the subcategory of $\Fil^{n}_k$ consisting of those multifiltered vector spaces that admit a splitting, one has:

\begin{thm}\label{equivalence sheaves and filtrations}
	The functor $\xi_{\A^n}$ induces an equivalence of categories

	\[\begin{tikzcd}
	\Bun(\A^n,\G_m^n)\ar[r,leftrightarrow]&\Fil^{n,splittable}_k,
	\end{tikzcd}
	\]
	which is compatible with direct sums and tensor products and where maps with constant rank on the left side correspond to maps that are $n$-strict on the right hand side. For $n=1,2$ one has $\Fil^n_k=\Fil_k^{n,splittable}$.
\end{thm}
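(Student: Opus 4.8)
The plan is to construct the inverse functor explicitly and then check it is mutually inverse to $\xi_{\A^n}$ on the subcategory of splittable multifiltrations. First I would set up the dictionary from the Preliminaries: a coherent equivariant sheaf on $\A^n = \Spec A$ is the same as a finitely generated $n$-graded $A$-module $M$, and being a vector bundle means $M$ is locally free. The key point is that $A$ is a polynomial ring, so for a torsion-free $M$ one has an injection $M \hookrightarrow M \otimes_A B$, and $M \otimes_A B$ is a finitely generated graded $B$-module. Since $B = k[z_1^{\pm 1},\dots,z_n^{\pm 1}]$ is the coordinate ring of the torus, graded $B$-modules that are locally free correspond simply to $\Z^n$-graded vector spaces; concretely $M \otimes_A B = \bigoplus_{p\in\Z^n} W^p \otimes_k z^{p} B_0$ for some finite-dimensional graded pieces, and I would define $V := \bigoplus_p W^p$ — or more invariantly, $V$ is the fiber of the bundle over the dense orbit, which carries the natural $\Z^n$-grading $V = \bigoplus V^p$ coming from the torus action. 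So from $M$ I recover a $\Z^n$-graded vector space $V$, hence in particular a splitting, hence $n$ filtrations $F_i^r V := \bigoplus_{p_i \geq r} V^p$. This is the candidate inverse functor $M \mapsto (V, F_1,\dots,F_n)$.

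Next I would verify the two round-trips. Starting from $(V,F_1,\dots,F_n)$ with a fixed splitting $V = \bigoplus V^p$: by definition $\Rs^n(V) = \sum_p F^p V \otimes_k z^{-p} A$, and I would compute directly that $F^p V = \bigoplus_{q \geq p} V^q$, so $\Rs^n(V) = \bigoplus_q V^q \otimes_k (\sum_{p \leq q} z^{-p} A)= \bigoplus_q V^q \otimes_k z^{-q} A$ as a graded module — here the splittability is exactly what makes the sum of submodules a direct sum with the clean description. Localizing at $B$ gives $V \otimes_k B$ with its evident grading, whose associated graded vector space is again $V$ with the same splitting. Conversely, starting from a vector bundle, equivalently a locally free graded $A$-module $M$, I would show $M = \Rs^n(V)$ for $V$ as constructed above: the inclusion $M \subseteq M\otimes_A B$ identifies the $p$-graded piece $M^p$ inside $(V\otimes_k B)^p = \bigoplus_q V^q \otimes z^{p}k$, and I must check $M^p = \bigoplus_{q\geq ?} \dots$; here the input that $M$ is a *vector bundle* (locally free, not just torsion-free) forces each $M^p$ to be "saturated downward in each coordinate", which translates precisely into $M^p = F^{-p}V \otimes z^p k$ (up to sign conventions), i.e. $M = \Rs^n(V)$. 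This last implication — that local freeness of the toric bundle forces the graded pieces of the module to be of Rees type, and in particular that the filtrations thus obtained are *splittable* — is the technical heart.

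For the statement that maps of constant rank correspond to $n$-strict maps: a morphism $f\colon M \to M'$ of graded modules localizes to $V \otimes B \to V' \otimes B$, i.e. a graded map $V \to V'$; I would observe that such a map automatically sends $F_i^r V$ into $F_i^r V'$ (being graded), and that it extends to a map of the Rees modules $\Rs^n(V) \to \Rs^n(V')$ iff it extends without poles, which is automatic since $\Rs^n$ of the image is determined. The content is the rank statement: the map has locally constant rank on $\A^n$ iff the rank over each torus-fixed "stratum" agrees, and working through the graded pieces one sees this is equivalent to the numerical condition defining $n$-strictness (the dimension of $F^p V \cap f^{-1}(F^p V')$, summed appropriately, behaving additively) — so this reduces to a bookkeeping lemma about strictness of filtered maps, applied one coordinate at a time.

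Finally, compatibility with $\oplus$ and $\otimes$ is formal once the functors are pinned down: $\Rs^n(V\oplus V') = \Rs^n(V)\oplus\Rs^n(V')$ is immediate from the definition as a subspace of $(V\oplus V')\otimes B$, and for tensor products one checks $\Rs^n(V)\otimes_A \Rs^n(V') = \Rs^n(V\otimes_k V')$ using the definition $(F\otimes F')^\Cdot = \sum_{p+q=\Cdot}F^p\otimes {F'}^q$ of the tensor filtration — the inclusion $\subseteq$ is clear and equality again uses the splittings. The assertion $\Fil^n_k = \Fil^{n,splittable}_k$ for $n=1,2$ I would prove by the standard argument: for $n=1$ pick a basis adapted to the flag; for $n=2$ use that two filtrations on a finite-dimensional vector space can be simultaneously split (e.g. by an inductive argument on $\dim V$, or by the classification of pairs of flags), whereas for $n\geq 3$ one cites the line-configuration counterexamples alluded to in the text. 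The main obstacle, as noted, is the second round-trip — extracting from "locally free graded $A$-module" the precise combinatorial shape of its graded pieces and thereby both the splittability and the formula $M \cong \Rs^n(V)$; everything else is either the standard affine equivariant dictionary or direct computation with the definitions.
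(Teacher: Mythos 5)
The paper does not actually prove this theorem --- it explicitly defers to the classification literature (Klyachko, Kaneyama, Perling) --- so your proposal can only be judged on its own terms, and there it has two genuine problems. First, your construction of the inverse functor is not correct as stated: the fiber of an equivariant bundle over a point of the dense orbit carries \emph{no} natural $\Z^n$-grading, because that point has trivial stabilizer; equivalently, $B$ is strongly graded, so finitely generated graded $B$-modules correspond to plain vector spaces (via the degree-zero part), not to graded vector spaces. A grading on $V$ is precisely a splitting of the multifiltration, which exists but is not functorial --- if your inverse functor produced gradings functorially, the target would be $\Gra^n_k$ rather than $\Fil^{n,splittable}_k$, and these categories have different morphisms. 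The correct inverse extracts the filtrations directly from the graded pieces of $M$ inside the localization: $V:=(M\otimes_AB)^0$ and $F^pV:=z^p\cdot M^{-p}$ (using torsion-freeness to view $M\subseteq M\otimes_AB$), with $F_i^r$ obtained by letting the other indices go to $-\infty$. Morphisms then preserve these filtrations because maps of graded modules preserve graded pieces --- not because the induced map $V\to V'$ is ``graded'', which it is not.

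Second, the step you correctly identify as the technical heart --- that local freeness forces $M^{-p}=F_1^{p_1}\cap\dots\cap F_n^{p_n}$ (so that $M\cong\Rs^n(V)$) and that the resulting filtrations are splittable --- is left entirely unproven, and this is where all the content of the theorem sits. It genuinely fails for torsion-free modules that are not locally free: for $M=(z_1,z_2)\subseteq k[z_1,z_2]$ one gets $V=k$, $F_1^0=F_2^0=V$ but $z^0M^{0}=0\neq F_1^0\cap F_2^0$, so ``saturated downward in each coordinate'' is not the relevant condition; what is needed is an argument via the fiber at the origin (in the spirit of Proposition~\ref{characterisation splittable filtrations}, comparing $\dim\bigoplus_pD^p$ with $\dim V$) or an induction over the torus-invariant strata using Lemma~\ref{Restriction properties of the affine Rees bundle}. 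Relatedly, your treatment of the constant-rank/$n$-strictness correspondence is only a gesture; the clean mechanism is the one in Proposition~\ref{maps between filtered spaces vs. maps between Rees-bundles}: a map of bundles has constant rank iff the cokernel sheaf is torsion-free, and the torsion of $\coker\xi_{\A^n}(f)$ is supported in codimension $>n$ (hence zero) exactly when $f$ is $n$-strict. The parts on $\oplus$, $\otimes$, and splittability for $n=1,2$ are fine.
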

	
We will not reproduce a proof here, but just explain the condition on $\xi_{\A^n}(V)$ to be a vector bundle and the notion of $n$-strictness used in the statement. For this, we need a small Lemma that can be checked directly from the definition of the Rees-module.

\begin{lem} \label{Restriction properties of the affine Rees bundle}
	The functors $\xi_{\A^n}(\_)$ behave as follows when restricted to torus-invariant subsets:
	
	\begin{enumerate}
		\item There is a functorial, $\G_m^n$-equivariant isomorphism \[\xi_{\A^n}(V,F_1,...,F_n)|_{\G_m\times\A^{n-1}}\cong \pr_{\A^{n-1}}^*\xi_{\A^{n-1}}(V,F_2,...,F_n)\] and similarly for other subvarieties of the form $\A^r\times\G_m^s\times\A^t$. In particular, there is an equivariant isomorphism $\xi_{\A^n}(V)|_{\G_m^n}\cong \Oh_{\G^n_m}\otimes_k V$.
		\item Given $(V,F_1,...,F_n)$ in $\Fil^n_k$, set $D^p:=\frac{F^p}{\sum_{i=1}^n F^{p+_i1}}$. There is a functorial isomorphism, equivariant for the $\G_m^n$-action given by the restricted action on the left and by the grading on the right.
		\[
		\xi_{\A^n}(V)|_{\{0\}}\cong \bigoplus_{p\in\Z^n} D^{-p}
		\]
	\end{enumerate}
\end{lem}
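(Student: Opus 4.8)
The plan is to translate both statements into commutative algebra via the dictionary recalled above (equivariant coherent sheaves on an affine toric variety $\Spec A$ correspond to $\Z^n$-graded $A$-modules, and $\xi_{\A^n}(V)$ corresponds to the Rees module $\Rs^n(V)$): restricting a sheaf to a torus-invariant open $\A^r\times\G_m^s\times\A^t$ is localization at the $s$ relevant coordinates, and restricting to the origin $\{0\}=\Spec A/\mathfrak m$ with $\mathfrak m=(z_1,\dots,z_n)$ is the base change $\Rs^n(V)\mapsto\Rs^n(V)/\mathfrak m\Rs^n(V)$. The one computation underlying everything is the description of the homogeneous pieces: from $\Rs^n(V)=\sum_{p\in\Z^n}F^pV\otimes_k z^{-p}A$ and the fact that the filtrations are descending, the degree-$q$ component is $\Rs^n(V)_q=F^{-q}V\otimes_k z^q$ (the a priori sum $\sum_{p\geq-q}F^pV$ collapses to its term $p=-q$), and multiplication by $z_i$, viewed as the map $\Rs^n(V)_q\to\Rs^n(V)_{q+_i1}$, is simply the inclusion of one step of the filtration into the next. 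I would record this first.

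Given this, part (1) is quick: inverting $z_1$ turns $\Rs^n(V)$ into $\sum_{p\in\Z^n}F^pV\otimes_k z^{-p}A[z_1^{-1}]$, and since $z_1$ is now a unit the factor $z^{-p}A[z_1^{-1}]$ depends only on $(p_2,\dots,p_n)$; for these fixed and $p_1\to-\infty$ one has $F^pV=F_2^{p_2}\cap\dots\cap F_n^{p_n}$, so the sum over $p_1$ collapses and what remains is precisely the extension of scalars of $\Rs^{n-1}(V,F_2,\dots,F_n)$ along $k[z_2,\dots,z_n]\hookrightarrow A[z_1^{-1}]$, i.e. the module underlying $\pr_{\A^{n-1}}^*\xi_{\A^{n-1}}(V,F_2,\dots,F_n)$, the tensor-product grading visibly yielding the claimed $\G_m^n$-structure. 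Inverting any chosen subset of the coordinates gives the $\A^r\times\G_m^s\times\A^t$ statement in the same way, and inverting all of them gives $\Rs^n(V)\otimes_AB=V\otimes_kB$, i.e. $\Oh_{\G_m^n}\otimes_kV$. For part (2) I would compute $\mathfrak m\Rs^n(V)$ in degree $q$: by the description above $z_i$ carries $\Rs^n(V)_{q-_i1}$ onto the subspace $F^{-q+_i1}V$ of $\Rs^n(V)_q=F^{-q}V$, so $(\mathfrak m\Rs^n(V))_q=\sum_{i=1}^nF^{-q+_i1}V$ and hence $(\Rs^n(V)/\mathfrak m\Rs^n(V))_q=F^{-q}V/\sum_iF^{-q+_i1}V=D^{-q}$; summing over $q$ and relabelling gives $\xi_{\A^n}(V)|_{\{0\}}\cong\bigoplus_pD^{-p}$, the grading being the asserted equivariant structure.

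I do not expect a serious obstacle — as the statement already indicates, this can be read off directly from the definition of the Rees module. The only points requiring attention are bookkeeping: the index reflection $q\mapsto-q$ (which is why $D^{-p}$, not $D^p$, appears, and why the filtrations get reindexed in the restriction statement), the absorption of powers of an inverted coordinate, checking that the plainly visible $k$-linear isomorphisms are homogeneous for the gradings encoding the torus actions, and the (routine) functoriality in $(V,F_1,\dots,F_n)$, which holds because a morphism of multifiltered vector spaces induces a grading-preserving morphism of Rees modules compatible with both localization and reduction modulo $\mathfrak m$.
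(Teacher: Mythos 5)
Your proof is correct, and it is precisely the direct verification from the definition of the Rees module that the paper alludes to but omits ("can be checked directly from the definition of the Rees-module"): the key observation that the degree-$q$ component of $\Rs^n(V)$ collapses to $F^{-q}V\otimes z^q$, with $z_i$ acting as the filtration inclusion, immediately yields both the localization statement in (1) and the fibre computation $(\Rs^n(V)/\mathfrak m\Rs^n(V))_q\cong D^{-q}$ in (2). No gaps.
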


How to see when $\xi_{\A^n}(V)$ is a vector bundle? For example this is the case if $V=k$ so that each filtration has a single jumping index $r_i$, i.e. $F_i^{r_i+1}=0\subseteq F_i^{r_i}=k$. Denoting $r=(r_1,...,r_n)$, the Rees module is then just $z^{-r}k[z_1,...,z_n]$, which is a free module of rank one. Let us denote the resulting Rees-bundle by $\Oh_{\A^n}(r)$.

\begin{prop}\label{characterisation splittable filtrations}
	Let $(V,F_1,...,F_n)$ be in $\Fil^n_k$. The following assertions are equivalent:
	\begin{enumerate}
		\item The filtrations $F_1,...,F_n$ are splittable.
		\item $\xi_{\A^n}(V,F_1,...,F_n)$ is a vector bundle.
		\item The spaces $D^p:=\frac{F^p}{\sum_{i=1}^n F^{p+_i1}}$ satisfy $\sum_{p\in\Z^n}\dim D^p=\dim V$.
	\end{enumerate}
If one of these conditions is satisfied, any splitting $V=\bigoplus_{p\in\Z^n} V^p$ determines an isomorphism
	\[
	\xi_{\A^n}(V)\cong \bigoplus_{p\in\Z^n} V^p\otimes_k\Oh_{\A^n}(p)
	\]
\end{prop}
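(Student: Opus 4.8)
The plan is to prove the chain of equivalences (1) $\Leftrightarrow$ (3) $\Leftrightarrow$ (2), and then to extract the explicit decomposition from a splitting. I would start with the easy implication (1) $\Rightarrow$ (everything): given a splitting $V = \bigoplus_p V^p$, I claim directly that $\Rs^n(V) = \bigoplus_p V^p \otimes_k z^{-p} A$. Indeed, using $F^r = F_1^{r_1} \cap \dots \cap F_n^{r_n} = \bigoplus_{p \geq r} V^p$ (which follows from intersecting the defining equalities of a splitting), one computes $\Rs^n(V) = \sum_r F^r \otimes z^{-r} A = \sum_r \bigl(\bigoplus_{p \geq r} V^p\bigr) \otimes z^{-r} A$; collecting the $V^p$-component, the terms with $r \leq p$ contribute $z^{-r} A \subseteq z^{-p} A$, and the largest is $z^{-p}A$ itself, so the $V^p$-isotypic part is exactly $V^p \otimes z^{-p} A$. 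This simultaneously shows $\xi_{\A^n}(V) \cong \bigoplus_p V^p \otimes_k \Oh_{\A^n}(p)$, which is locally free, giving (1) $\Rightarrow$ (2); and since $\Oh_{\A^n}(p)|_{\{0\}}$ contributes one dimension in degree $-p$ while the $D^q$ of a line bundle $\Oh_{\A^n}(p)$ is one-dimensional in degree $p$, part (2) of Lemma~\ref{Restriction properties of the affine Rees bundle} gives $\sum_p \dim D^p = \sum_p \dim V^p = \dim V$, i.e. (1) $\Rightarrow$ (3). (Alternatively (1) $\Rightarrow$ (3) is a direct computation: a splitting of $V$ induces one of each $D^p$.) This also handles the final sentence of the Proposition once the equivalences are in place.

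Next I would do (2) $\Rightarrow$ (3). If $\xi_{\A^n}(V)$ is a vector bundle of rank $r$, then its fiber at every point has dimension $r$; in particular, by Lemma~\ref{Restriction properties of the affine Rees bundle}(1) the restriction to the open orbit $\G_m^n$ is $\Oh_{\G_m^n} \otimes_k V$, so $r = \dim V$. On the other hand, by Lemma~\ref{Restriction properties of the affine Rees bundle}(2), the fiber at the origin is $\bigoplus_p D^{-p}$, whose dimension is $\sum_p \dim D^p$. Equating the two fiber dimensions gives $\sum_p \dim D^p = \dim V$, which is (3).

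The substantive implication is (3) $\Rightarrow$ (1): from the numerical condition $\sum_p \dim D^p = \dim V$ I must actually produce a splitting. The strategy is to choose, for each $p$ with $D^p \neq 0$, a subspace $V^p \subseteq F^p$ mapping isomorphically onto $D^p = F^p / \sum_i F^{p +_i 1}$, and then to verify that $V = \bigoplus_p V^p$ is a splitting, i.e. that $F_i^{r} = \bigoplus_{p_i \geq r} V^p$ for all $i, r$. The inclusion $\supseteq$ is automatic since $V^p \subseteq F^p \subseteq F_i^{p_i}$. For the reverse inclusion and the directness of the sum, the key is a downward induction (in the partial order on $\Z^n$, starting from large multi-indices where $F^p = 0$): one shows $F^p = \bigoplus_{q \geq p} V^q$ by combining the induction hypothesis applied to each $F^{p +_i 1}$ with the fact that $V^p$ surjects onto $F^p / \sum_i F^{p+_i 1}$; here the hypothesis $\sum_p \dim D^p = \dim V$ is what forces the sum $\bigsqcup_p V^p$ to be a basis of $V$ (a dimension count rules out any collapsing), and a standard multifiltration argument (essentially that $\sum_i F^{p +_i 1}$, being a sum of the subspaces $F_i^{p_i+1} \cap F^p$, interacts well with the chosen complements) upgrades "$F^p$ is spanned by the $V^q$, $q \geq p$" to a direct sum. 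Finally, intersecting $V = \bigoplus_q V^q$ with $F_i^r$ and using that each $F_i^r$ is a union of the $F^p$'s recovers $F_i^r = \bigoplus_{q_i \geq r} V^q$, completing the splitting.

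The main obstacle I anticipate is precisely this last step: turning the numerical equality in (3) into an honest direct-sum decomposition compatible with \emph{all} $n$ filtrations simultaneously. The delicate point is that choosing complements $V^p$ one multi-index at a time is not obviously consistent across the lattice — one needs the global dimension count to guarantee that no "unexpected" linear relations among the $V^p$ appear, and one needs the order structure of $\Z^n$ (and the fact that each $F^p$ is an intersection of members of the individual filtrations, while $\sum_i F^{p+_i 1}$ is the "next layer down") to run the induction cleanly. Everything else — (1) $\Rightarrow$ (2), (1) $\Rightarrow$ (3), (2) $\Rightarrow$ (3), and the explicit formula $\xi_{\A^n}(V) \cong \bigoplus_p V^p \otimes_k \Oh_{\A^n}(p)$ — is a direct unwinding of the definition of the Rees module together with Lemma~\ref{Restriction properties of the affine Rees bundle}.
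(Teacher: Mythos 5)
Your proposal is correct and follows essentially the same route as the paper: (1) $\Rightarrow$ (2) via the explicit decomposition of the Rees module, (2) $\Rightarrow$ (3) by comparing fibre dimensions over the open orbit and the origin using Lemma~\ref{Restriction properties of the affine Rees bundle}, and (3) $\Rightarrow$ (1) by lifting each $D^p$ to a subspace $V^p\subseteq F^p$ and using the dimension count to force the spanning family $\{V^p\}$ into a direct sum. The only difference is one of detail: you spell out the downward induction establishing $F^p=\sum_{r\geq p}V^r$, which the paper asserts as immediate "by construction", and you note (correctly) that the global equality $\dim V=\sum_p\dim V^p$ already makes all partial sums direct, so no further argument is needed there.
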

\begin{proof}
	We first show $1.\Rightarrow 2.$: If $V=\bigoplus V^p$ is a splitting of the $F_i$, there is an isomorphism $\xi_{\A^n}(V)=\bigoplus \xi_{\A^n}(V^p)\cong \bigoplus V^p\otimes\Oh_{\A^n}(p)$, which is free.\\
	
	From $2.$ to $3.$, note that a vector bundle has constant fibre dimension. By Lemma \ref{Restriction properties of the affine Rees bundle}, the fibre over any point (e.g. $(1,...,1)$) in $\G_m^n$ is canonically identified with $V$, while the fibre over $0$ is canonically identified with $\bigoplus D^p$.\\
	
	Finally, assuming $3.$, choose, for every $p\in\Z^n$, subspaces $V^{p}\subseteq F^{p}$ s.t. $V^{p}$ projects isomorphically onto $D^{p}$.
	By construction, we have 
	\[
	F^{p}=\sum_{\substack{r\in\Z^n\\r\geq p}}V^{r}.
	\]
	In particular, 
	\[
	\dim V\leq\sum_{p\in\Z^n}\dim V^{p}=\sum_{p\in\Z^n}\dim D^{p}=\dim V.
	\]
	Therefore, there has to be an equality and the sum of the $V^{p}$ is direct.
\end{proof}

\begin{definition} \label{r-strictness}
A map  
	\[
	f:(V,F_1,...,F_n)\longrightarrow(W,G_1,...,G_n)
	\]
	in $\Fil^n_k$ is called $r$-strict if for every collection $\{i_1,...,i_r\}$ of indices in $\{1,...,n\}$ and all $r$-tuples of integers $(p_{1},...,p_{r})\in\Z^r$,
	\[
	f(F_{i_1}^{p_1}\cap...\cap F^{p_r}_{i_r})=G_{i_1}^{p_r}\cap...\cap G^{p_r}_{i_r}\cap\operatorname{im}(f).
	\]
\end{definition}
	E.g., $1$-strictness coincides with usual strictness for all filtrations and if there exist splittings for the $F_i$ and $G_i$ respected by $f$, then $f$ is $n$-strict.

\begin{prop}\label{maps between filtered spaces vs. maps between Rees-bundles}
	Let $f:V\longrightarrow W$ be a map in $\Fil_k^n$ and $\xi_{\A^n}(f)$ the associated map of Rees-sheaves. 
	\begin{itemize}
		\item The morphism induced by the inclusion $\ker f\hookrightarrow \underline{V}$ gives a canonical identification
		\[
		\xi_{\A^n}(\ker f)\cong\ker\xi_{\A^n}(f).
		\]
		\item There is an exact sequence
		\[
		0\longrightarrow T\longrightarrow\coker\xi_{\A^n}(f)\overset{\varphi}{\longrightarrow}\xi_{\A^n}(\coker f)\longrightarrow 0,
		\]
		where $\coker\xi_{\A^n}(f)$ is the sheaf-theoretic cokernel and $T$ is the torsion subsheaf. Further, \[
	f \text{ is } r \text{-strict}\Leftrightarrow\operatorname{codim}(\operatorname{supp}(T))> r.
	\]
	\end{itemize}
\end{prop}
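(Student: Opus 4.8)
The plan is to analyze the Rees-module construction locally and reduce everything to linear algebra over the vector spaces and filtrations. Recall $\Rs^n(V) = \sum_{p} F^pV \otimes z^{-p}A \subseteq V \otimes_k B$. The map $\xi_{\A^n}(f)$ comes from the $A$-linear (graded) map $\Rs^n(f) = f \otimes \Id_B$ restricted to $\Rs^n(V) \to \Rs^n(W)$. The first observation is that $\Rs^n$ is visibly left-exact: since $\Rs^n(V) = V \otimes_k B \cap (\text{something})$... more precisely, $\Rs^n(V) = \bigoplus_{q \in \Z^n} F^{-q}V \otimes z^q$ as the degree-$q$ part is exactly $F^{-q}V \otimes z^q$ (using that $F^p V \otimes z^{-p} A$ in degree $q$ contributes $F^pV$ whenever $-p \le q$, i.e. $p \ge -q$, and $\sum_{p \ge -q} F^p V = F^{-q}V$). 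So $\Rs^n(V)_q = F^{-q}V$, and $\Rs^n(f)_q$ is just $f|_{F^{-q}V} : F^{-q}V \to F^{-q}W$. Hence $\ker \Rs^n(f)_q = \ker f \cap F^{-q}V = F^{-q}(\ker f)$ (intersections of filtrations restrict to the filtration on the subspace), which is exactly $\Rs^n(\ker f)_q$. This gives the kernel statement: $\xi_{\A^n}(\ker f) \cong \ker \xi_{\A^n}(f)$, sheafifying the identity of graded modules.

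For the cokernel, the degree-$q$ part of the \emph{module-theoretic} cokernel $\coker \Rs^n(f)$ is $F^{-q}W / f(F^{-q}V)$, whereas $\Rs^n(\coker f)_q = F^{-q}(\coker f) = $ (image of $F^{-q}W$ in $W/f(V)$) $= F^{-q}W / (F^{-q}W \cap f(V))$. The natural surjection $\varphi : \coker\Rs^n(f)_q \twoheadrightarrow \Rs^n(\coker f)_q$ has kernel $(F^{-q}W \cap f(V)) / f(F^{-q}V)$, which is precisely the "strictness defect" in degree $q$ — it vanishes for all $q$ iff $f$ is $1$-strict (usual strictness for the single filtration $F = F_1 \cap \dots \cap F_n$, well, for the combined filtration). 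Sheafifying, $\varphi : \coker \xi_{\A^n}(f) \to \xi_{\A^n}(\coker f)$ is surjective with kernel $T'$; since $\xi_{\A^n}(\coker f)$ is torsion-free (it is a Rees-sheaf, a submodule of $\coker f \otimes_k B$), $T'$ contains the torsion subsheaf $T$ of $\coker \xi_{\A^n}(f)$. One needs $T' = T$, i.e. that $\coker \xi_{\A^n}(f)$ modulo $T'$ has no torsion — but that quotient is $\xi_{\A^n}(\coker f)$, torsion-free, so indeed $T' \supseteq T$; conversely any torsion in $\coker \xi_{\A^n}(f)$ maps to zero in the torsion-free $\xi_{\A^n}(\coker f)$ hence lies in $T'$, giving $T = T'$. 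This establishes the exact sequence.

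For the equivalence "$f$ is $r$-strict $\Leftrightarrow \operatorname{codim}(\operatorname{supp} T) > r$", the strategy is to restrict to the torus-invariant strata using Lemma \ref{Restriction properties of the affine Rees bundle}. By part (1) of that lemma, restricting $\xi_{\A^n}(V)$ to a stratum $\A^r \times \G_m^{n-r}$ (up to reindexing) amounts to applying $\xi_{\A^r}$ to $(V, F_{i_1}, \dots, F_{i_r})$ for the $r$ "affine" coordinates and tensoring with the structure sheaf in the $\G_m$-directions; the torus-orbit strata of $\A^n$ are indexed by subsets $S \subseteq \{1,\dots,n\}$ with closure-codimension $|S|$. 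The torsion sheaf $T$ is supported on the union of the coordinate hyperplanes, and $\operatorname{codim}(\operatorname{supp} T) > r$ says $T$ vanishes on every stratum of codimension $\le r$, equivalently (since $T$ is the torsion of $\coker\xi_{\A^n}(f)$ and formation of cokernel commutes with the flat restriction to a stratum) the restriction $\xi_{\A^n}(f)|_{\text{stratum}}$ has torsion-free cokernel there for every $\le r$-codimensional stratum. By the cokernel analysis above applied on each such stratum, this holds iff for each subset $\{i_1,\dots,i_s\}$ with $s \le r$, the map $f$ is strict for the single combined filtration $F_{i_1} \cap \dots \cap F_{i_s}$ — and ranging over all such subsets, that is exactly the definition of $r$-strictness. \textbf{The main obstacle} I anticipate is the bookkeeping in this last step: one must check carefully that restriction to a stratum commutes with taking cokernels (flatness of $\Oh_{\text{stratum}}$ over $\Oh_{\A^n}$ localized suitably — fine, since strata are cut out by localizing/quotienting in disjoint coordinate sets) and that torsion-freeness of a coherent sheaf on $\A^n$ is detected stratum-by-stratum on the invariant strata (true because the support of torsion, if nonempty, is a nonempty closed torus-invariant subset, hence meets some stratum). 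Assembling the iff over all strata of codimension $\le r$ versus all index subsets of size $\le r$ is then a matter of matching the combinatorics, but it should be spelled out rather than asserted.
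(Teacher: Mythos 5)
Your degree-by-degree computation on the Rees module is the same as the paper's argument, and your identifications $\Rs^n(V)_q=F^{-q}V$, $\ker\Rs^n(f)_q=F^{-q}(\ker f)$ and $\ker\varphi_q=(F^{-q}W\cap f(V))/f(F^{-q}V)$ are all correct. There is, however, one genuine logical gap in the cokernel part: you need \emph{both} inclusions $T\subseteq\ker\varphi$ and $\ker\varphi\subseteq T$, but both halves of your sentence (``$\coker\xi_{\A^n}(f)$ modulo $T'$ is torsion-free, so $T'\supseteq T$'' and ``any torsion maps to zero in the torsion-free target, hence lies in $T'$'') prove the \emph{same} containment $T\subseteq\ker\varphi$. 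The substantive direction, that every element of $\ker\varphi$ is actually torsion, is never argued, and without it $T'$ could a priori be strictly larger than $T$. The fix is short and uses exhaustiveness of the filtrations: given $w\in F^{p}W\cap f(V)$, write $w=f(v)$; since $V=\bigcup_{q\geq 0}F^{p-q}V$, we have $v\in F^{p-q}V$ for some $q\in\Z_{\geq 0}^n$, and then $z^{q}\cdot\bigl(w\otimes z^{-p}\bigr)=f(v)\otimes z^{-(p-q)}$ lies in the image of $\Rs^n(f)$, so the class of $w$ in $\coker\Rs^n(f)$ is annihilated by $z^{q}$. You should make this explicit; it is exactly the content of the paper's formula for $\ker\varphi^p$.

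On the last equivalence, your route differs from (and goes beyond) the paper: the paper only verifies the case $r=n$, where $\operatorname{codim}(\operatorname{supp}T)>n$ means $T=0$, i.e.\ $\varphi$ is an isomorphism, i.e.\ $f$ is $n$-strict. Your reduction of general $r$ to this case by restricting to the torus-invariant open sets $U_S\cong\A^{|S|}\times\G_m^{n-|S|}$, $|S|\leq r$, using part (1) of Lemma \ref{Restriction properties of the affine Rees bundle}, is sound: restriction to an open set is exact and preserves torsion subsheaves, pullback along the flat projection $U_S\to\A^{|S|}$ commutes with cokernels and torsion, and $\operatorname{supp}T$ is closed and torus-invariant, hence a union of orbit closures, so $\operatorname{codim}(\operatorname{supp}T)>r$ iff $T|_{U_S}=0$ for all $|S|\leq r$. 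This is a genuine completion of the argument the paper leaves implicit, but note that its base case on each stratum is precisely the $r=n$ statement, so it inherits the gap above until you supply the one-line torsion computation.
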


\begin{proof}
	It suffices to check this on global sections, i.e. on the Rees-module. For the first point, note that for any $p\in\Z^n$, a section $v_p\otimes z^p$ with $v_p\in F^p_V$ is mapped to zero iff $v_p\in\ker f$.\\
	
	For the second point, let $\pi:W\rightarrow \coker f$ denote the projection. The global sections of the two right members of the claimed sequence are then given (where the sum is direct as $k$-vector spaces) by
	\[
	\Gamma(\A^n,\coker\xi_{\A^n}(f))=\bigoplus_{p\in\Z^n}\frac{F_W^p}{f(F_V^p)}z^{-p}
	\]
	and
	\[
	\Gamma(\A^n,\xi_{\A^n}(\coker f))=\bigoplus_{p\in\Z^n}\pi F_W^pz^{-p}.
	\]
	Denoting by $\varphi^p:\frac{F_W^p}{f(F_V^p)}\longrightarrow \pi F_W^p$ the natural map, we can define the map $\varphi$ to be induced by the direct sum of the $\varphi^p$. Using 
	\[\ker\varphi^p=\{w\in F_W^p\mid \exists q\in\N^n\text{ s.t. } w\in f(F_W^{p-q})\}\text{ mod }f(F_V^p),
	\]
	one verifies that $\ker\varphi=T$ coincides with the torsion subsheaf.\\
	
	For simplicity, we verify the statement on the dimension of the support of $T$ only in the case $r=n$: In fact, $n$-strictness is equivalent to the condition that all the $\varphi^p$ are isomorphisms, which is in turn equivalent to $\varphi$ being an isomorphism.
\end{proof}

Let us end with describing a projective variant of the above constructions. For $(V,F_0,...,F_n)$ in $\Fil^{n+1}_k$, define $\xi_{\Pro^n_k}(V)$ as the invariants under the diagonal $\G_m$ of the pushforward of $\xi_{\A^{n+1}}(V)|_{\A^{n+1}\setminus \{0\}}$. Alternatively, apply $\xi_{\A^n}$ to $V$ equipped with all filtrations except one, say $F_i$ to obtain a bundle on $\A^n\cong \{Z_i\neq 0\}\subseteq \Pro_k^n$. These patch together by Lemma \ref{Restriction properties of the affine Rees bundle}.
\begin{cor}
	The functor $\xi_{\Pro^n}$ gives rise to an equivalence of categories
	\[\begin{tikzcd}
	\Bun(\Pro^n,\G_m^n)\ar[r,leftrightarrow]&\Fil^{n+1,n-splittable}_k,
	\end{tikzcd}
	\]
	where $\Fil^{n+1,n-splittable}$ is the full subcategory of $\Fil^{n+1}_k$ consisting of objects s.t. every $n$ out of the $n+1$ filtrations admit a splitting. 
\end{cor}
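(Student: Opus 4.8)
The plan is to bootstrap the corollary from Theorem \ref{equivalence sheaves and filtrations} using the second (alternative) description of $\xi_{\Pro^n}$ given just above the statement, namely that on each standard chart $U_i=\{Z_i\neq 0\}\cong\A^n$ the bundle $\xi_{\Pro^n}(V,F_0,\dots,F_n)$ restricts to $\xi_{\A^n}(V,F_0,\dots,\widehat{F_i},\dots,F_n)$ — the affine Rees-sheaf formed from the $n$ filtrations other than $F_i$ — and that these glue by Lemma \ref{Restriction properties of the affine Rees bundle}(1). First I would make the gluing precise: on the overlap $U_i\cap U_j$ one is comparing $\xi_{\A^n}$ of the same vector space equipped with two different $n$-element subsets of the filtrations, restricted to a locus of the form $\G_m\times\A^{n-1}$, and part (1) of the Lemma says both sides are pulled back from $\xi_{\A^{n-1}}$ of $V$ with the common $(n-1)$ filtrations, yielding a canonical isomorphism; the cocycle condition on triple overlaps follows from the functoriality and naturality of those isomorphisms. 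This also shows $\xi_{\Pro^n}$ is a functor, and that it is compatible with direct sums and tensor products chart by chart.

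Next I would establish essential surjectivity and the characterization of the image. An equivariant coherent sheaf $\cE$ on $\Pro^n$ restricts on each $U_i\cong\A^n$ to an object of $\Bun(\A^n,\G_m^n)$, hence by Theorem \ref{equivalence sheaves and filtrations} is $\xi_{\A^n}$ of a (necessarily $n$-splittable) multifiltered space $(V_i, F^{(i)}_0,\dots,\widehat{\ \cdot\ },\dots)$. The gluing isomorphisms on overlaps, together with the full faithfulness already proved on $\A^n$ and Lemma \ref{Restriction properties of the affine Rees bundle}(1), force all the $V_i$ to be canonically identified with a single $V$ and the various filtrations to assemble into filtrations $F_0,\dots,F_n$ on $V$: indeed the restriction to $U_i\cap U_j$ records exactly $F_\ell$ for $\ell\neq i,j$, and letting $j$ vary recovers each individual $F_\ell$, $\ell\neq i$, while $i$ itself ranges over all indices so every $F_\ell$ is seen. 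The condition that $\cE$ be a genuine vector bundle (coherent and locally free), via Proposition \ref{characterisation splittable filtrations} applied on each chart, is precisely that every $n$-element subset of $\{F_0,\dots,F_n\}$ is splittable, i.e. that $(V,F_0,\dots,F_n)\in\Fil^{n+1,n\mhyphen splittable}_k$. Thus $\xi_{\Pro^n}$ lands in, and hits, exactly this subcategory.

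For full faithfulness, a morphism $\xi_{\Pro^n}(V)\to\xi_{\Pro^n}(W)$ of equivariant sheaves restricts on each $U_i$ to a morphism of affine Rees-sheaves, which by Theorem \ref{equivalence sheaves and filtrations} is $\xi_{\A^n}$ of a unique filtered map $V\to W$ (for the $n$-element subset omitting $F_i$); the compatibility on overlaps and the faithfulness on $\A^{n-1}$ show all these filtered maps coincide with a single linear map $f:V\to W$ that respects every $F_\ell$, and conversely any such $f$ globalizes. Hence $\Hom$ on both sides agree.

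The main obstacle I anticipate is purely bookkeeping rather than conceptual: carefully tracking the reindexing of filtrations across the charts $U_0,\dots,U_n$ and verifying that the canonical isomorphisms of Lemma \ref{Restriction properties of the affine Rees bundle}(1) satisfy the cocycle condition on triple overlaps $U_i\cap U_j\cap U_k$, so that the assembled datum $(V,F_0,\dots,F_n)$ is well-defined and functorial. A secondary point to handle with care is the equivalence between the two definitions of $\xi_{\Pro^n}$ (diagonal-$\G_m$-invariants of a pushforward versus chartwise patching) and checking that the equivariant structure coming from the grading matches under gluing; both reduce to the computations already packaged in Lemma \ref{Restriction properties of the affine Rees bundle}, so no genuinely new input beyond the affine case is needed.
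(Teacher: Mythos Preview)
Your proposal is correct and follows exactly the route the paper intends: the corollary is stated without an explicit proof in the paper, immediately after the chartwise description of $\xi_{\Pro^n}$, so the implied argument is precisely the one you give---apply Theorem~\ref{equivalence sheaves and filtrations} on each standard chart $U_i\cong\A^n$, glue via Lemma~\ref{Restriction properties of the affine Rees bundle}(1), and use Proposition~\ref{characterisation splittable filtrations} to identify the locally-free condition with $n$-splittability of each $n$-element subset of the filtrations. Your identification of the only subtlety (the cocycle bookkeeping on triple overlaps and the consistency of the two descriptions of $\xi_{\Pro^n}$) is accurate, and as you note it reduces to the naturality already packaged in the Lemma.
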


\section{Toric GAGA}
In this section, we are interested in the case $k=\C$ and we only treat vector bundles, not more general equivariant sheaves. We consider equivariant connections and weaker (than being algebraic) regularity conditions on transition functions and group actions.\\

\textbf{Notations and conventions:} Let $\Gra^n_\C$ denote the category of finite dimensional $n$-graded $\C$-vector spaces. For $G=(\C^\times)^n$ or $G=\T^n:=(S^1)^n$, by $\Bun^\omega(\C^n,G)$ with $\omega\in\{\operatorname{alg}, \operatorname{hol},  \operatorname{sm}, \operatorname{cont}\}$, we mean algebraic, holomorphic, smooth or continuous $G$-equivariant bundles on $\C^n$ when meaningful. We identify $\Bun(\A^n,\G_m^n)$ from the previous section with $\Bun^{\operatorname{alg}}(\C^n,(\C^\times)^n)$ (by considering the complex valued points) and we switch freely between geometric vector bundles and locally free sheaves over the corresponding structure sheaf. By $\Rep^\omega(G)$, we denote algebraic, holomorphic, smooth or continuous representations of $G$. Those cases of interest to us are related to each other by the following diagram:

\[\tag{$\ast$}
\begin{tikzcd}
\Bun^{\operatorname{\operatorname{alg}}}(\C^n,(\C^\times)^n)\ar[r]\ar[dd]&\Rep^{\operatorname{alg}}((\C^\times)^n)\ar[dd]\ar[rddd]&\\
&&\\
\Bun^{\operatorname{hol}}(\C^n,(\C^\times)^n)\ar[r]\ar[dd]&\Rep^{\operatorname{hol}}((\C^\times)^n)\ar[dd]\ar[rd]&\\
&&\Gra^n_\C\\
\Bun^{\operatorname{sm}}(\C^n,\T^n)\ar[r]\ar[dd]&\Rep^{\operatorname{sm}}(\T^n)\ar[dd]\ar[ru]&\\
&&\\
\Bun^{\operatorname{cont}}(\C^n,\T^n)\ar[r]&\Rep^{\operatorname{cont}}(\T^n)\ar[ruuu]&
\end{tikzcd}
\]
Here, the vertical arrows forget some structure (or in sheaf-theoretic terms, tensor by a bigger structure sheaf and restrict the action), the horizontal ones are restriction to the fibre at $0$ and the diagonal ones are given by the rule $(V,\rho)\mapsto V=\bigoplus_{p\in\Z^n} V^p$ where $V^p$ with $p=(p_1,...,p_n)$ is the eigenspace of the character 
\[
\chi_p:(\lambda_1,...,\lambda_n)\mapsto \lambda^{-p}:=\lambda_1^{-p_1}\cdot...\cdot\lambda_n^{-p_n}.
\]

All diagonal arrows are equivalences of categories, hence so are all arrows in the middle column. To a $G$-representation $(V,\rho)$ in $\Rep^\omega(G)$, one can associate the trivial (geometric) vector bundle $\widetilde{V}:=\C^n\times V$ with product action
\[
\lambda.(x,v)=(\lambda.x,\rho(\lambda)x).
\] 
We also denote this $\widetilde{V}^\omega$ if we want to emphasize that we consider it as a bundle in $\Bun^\omega(\C^n,G)$. E.g., in the notation of the previous section, $\widetilde{V}^{\operatorname{alg}}=\bigoplus_{p\in\Z^n}V^p\otimes_\C\Oh_{\A^n}$, where as above $V^p$ is the eigenspace of $\chi_p$, but equipped with the trivial action.\\

The invariant global sections of $\widetilde{V}^\omega$ can be identified with equivariant (algebraic, holomorphic, smooth or continuous) maps $\C^n\longrightarrow V$ and the restriction of $\widetilde{V}$ to $0$ is canonically isomorphic to $(V,\rho)$. In particular, all horizontal arrows in $(\ast)$ are essentially surjective.\\

Because $\C^n$ is $\T^n$-equivariantly contractible to the fixed point $0$, the restriction to the fibre
\[
\Bun^{\omega}(\C^n,\T^n)\longrightarrow \Rep^{\omega}(\T^n)
\]
induces a bijection on isomorphism classes\footnote{This follows from the `homotopy invariance of isomorphism classes of equivariant vector bundles': Given a compact topological group $G$ and two equivariantly homotopic equivariant maps ${f_0,f_1:X\longrightarrow Y}$ between $G$-spaces with $X$ paracompact. Then for any vector bundle $\V$ on $Y$ the pullbacks $f_0^*\V$ and $f_1^*\V$ are equivariantly isomorphic.\\
	This is essentially proven in \cite{atiyah_$k$-theory_1967}, p. 40f. However, there only the case of a compact hausdorff base and a finite group is treated. See \cite{segal_equivariant_1968} for the case of a compact group (but still compact base) and, e.g., \cite[p. 21]{zois_18_2010} for the case of non-equivariant bundles over a paracompact base, which can be adapted to the equivariant case by a standard averaging trick.} for $\omega\in\{\operatorname{sm}, \operatorname{cont}\}$ and by Theorem \ref{equivalence sheaves and filtrations} and Proposition \ref{characterisation splittable filtrations}, so does 
\[
\Bun^{\operatorname{alg}}(\C^n,(\C^\times)^n)\longrightarrow \Gra^n_\C. 
\]
Finally, by a general Theorem of Heinzner and Kutzschebauch on equivariant bundles on Stein spaces\footnote{\cite[p. 341]{heinzner_equivariant_1995}, see also \cite[par. 1.4.]{heinzner_geometric_1991} for the notions used in the statement.} the forgetful functor
\[
\Bun^{\operatorname{hol}}(\C^n,(\C^\times)^n)\longrightarrow \Bun^{\operatorname{cont}}(\C^n,\T^n)
\]
induces a bijection on isomorphism classes. Summing up the statements in the previous paragraphs, one obtains:

\begin{prop}
	In the diagram $(\ast)$, all arrows induce bijections on isomorphism classes. The sides of the triangles on the right are equivalences of categories.
\end{prop}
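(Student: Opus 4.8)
## Proof Plan

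The plan is to establish the claims arrow by arrow, using the structural results already assembled in the excerpt. First I would observe that the three diagonal arrows into $\Gra^n_\C$ are equivalences of categories: each sends a representation $(V,\rho)$ to its weight-space decomposition $V=\bigoplus_p V^p$, and a quasi-inverse is given by equipping an $n$-graded vector space with the action where $\chi_p$ acts on $V^p$. That this is an equivalence in each of the regularity classes $\operatorname{alg}$, $\operatorname{hol}$, $\operatorname{sm}$ reduces to the statement that every finite-dimensional representation of $(\C^\times)^n$ (resp. $\T^n$) in the relevant category decomposes as a direct sum of characters and that all morphisms are direct sums of scalar maps between matching weight spaces — this is the classical representation theory of tori (for the continuous and smooth cases one uses the Peter–Weyl decomposition; for the algebraic/holomorphic cases, linear reductivity). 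Once the diagonal arrows are equivalences, the two-out-of-three property for the commuting right-hand triangles forces the vertical arrows in the middle column $\Rep^\omega(G)\to\Rep^{\omega'}(G)$ to be equivalences as well, and likewise the slanted arrow $\Rep^{\operatorname{alg}}((\C^\times)^n)\to\Gra^n_\C$.

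Next I would handle the bijectivity on isomorphism classes of the remaining arrows. The horizontal arrows (restriction to the fibre over $0$) are essentially surjective because $(V,\rho)\mapsto\widetilde V^\omega$ splits them up to isomorphism, as noted in the text; for injectivity on isomorphism classes one invokes, in the topological and smooth cases, equivariant homotopy invariance of isomorphism classes of bundles over the $\T^n$-equivariantly contractible space $\C^n$ (the footnoted Atiyah–Segal-type statement), and in the algebraic case one uses Theorem \ref{equivalence sheaves and filtrations} together with Proposition \ref{characterisation splittable filtrations}: an algebraic toric bundle on $\A^n$ is determined up to isomorphism by its splittable multifiltration, which in turn is determined up to isomorphism by the graded pieces of the associated graded, i.e. by the fibre at $0$ as a graded space. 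For the vertical arrow $\Bun^{\operatorname{hol}}(\C^n,(\C^\times)^n)\to\Bun^{\operatorname{cont}}(\C^n,\T^n)$ I would cite the Heinzner–Kutzschebauch theorem on equivariant holomorphic bundles over Stein spaces (here $\C^n$ with the action of the reductive group $(\C^\times)^n$, whose maximal compact is $\T^n$), which gives bijectivity on isomorphism classes directly. The remaining vertical and horizontal arrows then inherit bijectivity on isomorphism classes by composing along the diagram and using that a composite of bijections is a bijection together with the two-out-of-three principle once two of the three maps in any commuting triangle are known to be bijective.

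The one subtle point — and the step I expect to be the main obstacle — is the interaction between the algebraic/holomorphic rows on the one hand and the compact-torus rows on the other, since the arrows there change both the regularity and the group ($(\C^\times)^n\rightsquigarrow\T^n$). One must be careful that the diagram genuinely commutes on the nose (not just up to natural isomorphism) with the chosen quasi-inverses, and that the footnoted homotopy-invariance statement applies in the generality needed (paracompact, non-compact base $\C^n$, compact group $\T^n$); this is exactly what the footnote addresses by combining the Atiyah and Segal arguments with the standard averaging trick. Granting these citations, the proposition follows formally: all arrows in $(\ast)$ induce bijections on isomorphism classes, and the three arrows forming each right-hand triangle are equivalences of categories because the diagonal ones are and the triangles commute.
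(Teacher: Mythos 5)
Your proposal is correct and follows essentially the same route as the paper: the diagonal arrows are equivalences by the representation theory of tori, the horizontal arrows are essentially surjective via $\widetilde V^\omega$ and bijective on isomorphism classes by equivariant contractibility (smooth/continuous cases) and by Theorem \ref{equivalence sheaves and filtrations} with Proposition \ref{characterisation splittable filtrations} (algebraic case), the Heinzner--Kutzschebauch theorem handles the holomorphic-to-continuous forgetful functor, and the rest follows by composing around the diagram. The paper's ``proof'' is exactly this summing-up of the preceding paragraphs, so nothing further is needed.
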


For the functor from algebraic to holomorphic bundles, even more is true:
\begin{prop}\label{toric GAGA affine case}
	The functor 
	\[
	\Bun^{\operatorname{alg}}(\C^n,(\C^\times)^n)\longrightarrow \Bun^{\operatorname{hol}}(\C^n,(\C^\times)^n)
	\]
	is an equivalence of categories.
\end{prop}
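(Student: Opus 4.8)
The plan is to exhibit a quasi-inverse to the analytification functor, using the fact (established in the preceding proposition) that both categories admit explicit models: on the algebraic side, $\Bun^{\operatorname{alg}}(\C^n,(\C^\times)^n)\simeq\Fil^{n,\mathrm{splittable}}_\C$ via $\xi_{\A^n}$, and in both cases the restriction-to-the-fibre-at-$0$ functor to the relevant category of representations is essentially surjective and a bijection on isomorphism classes. So the real content is full faithfulness: I must show that for algebraic toric bundles $E,E'$ on $\C^n$, the map $\Hom^{\operatorname{alg}}(E,E')\to\Hom^{\operatorname{hol}}(E^{\an},(E')^{\an})$ is a bijection.

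First I would reduce to the split case. By Proposition \ref{characterisation splittable filtrations}, every object of $\Bun^{\operatorname{alg}}(\C^n,(\C^\times)^n)$ is isomorphic to a finite direct sum $\bigoplus_p V^p\otimes_\C\Oh_{\A^n}(p)$, i.e.\ to $\widetilde{V}^{\operatorname{alg}}$ for a graded vector space $(V,\rho)$ (after absorbing the line-bundle twists into the grading/action). Since Hom-sets and the analytification functor are additive, and since $\widetilde{(\_)}$ turns direct sums of representations into direct sums of bundles, it suffices to compute $\Hom(\widetilde{V},\widetilde{W})$ for two graded vector spaces $V,W$ in the algebraic and in the holomorphic settings and check the comparison map is an isomorphism. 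An equivariant bundle map $\widetilde{V}\to\widetilde{W}$ over $\C^n$ is the same as an equivariant map $\C^n\to\Hom_\C(V,W)$, where $(\C^\times)^n$ (resp.\ its maximal compact, but here we stay with $(\C^\times)^n$) acts on the target by $\lambda\cdot\phi=\rho_W(\lambda)\circ\phi\circ\rho_V(\lambda)^{-1}$; decomposing $\Hom_\C(V,W)=\bigoplus_{p\in\Z^n}\Hom(V,W)^p$ into character eigenspaces, an equivariant map is a tuple $(f_p)$ with $f_p\colon\C^n\to\Hom(V,W)^p$ satisfying $f_p(\lambda x)=\lambda^{-p}f_p(x)$.

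The key step is then purely a statement about functions on $\C^n$: a holomorphic function $g\colon\C^n\to\C$ satisfying $g(\lambda x)=\lambda^{-p} g(x)$ for all $\lambda\in(\C^\times)^n$ is automatically a Laurent monomial, hence polynomial precisely when it extends across the coordinate hyperplanes, and in all cases the holomorphic solutions coincide with the algebraic ones. Concretely, writing $g(x)=\sum_{q\in\N^n}a_q x^q$ as a globally convergent power series, the transformation law forces $a_q=0$ unless $q=-p$; thus $g$ is the monomial $x^{-p}$ up to scalar when $-p\in\N^n$ and $g\equiv 0$ otherwise, and this is visibly the same answer one gets algebraically from $\Rs^n$. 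Summing over $p$ and over a basis of $\Hom(V,W)$, the comparison $\Hom^{\operatorname{alg}}(\widetilde V,\widetilde W)\to\Hom^{\operatorname{hol}}(\widetilde V^{\an},\widetilde W^{\an})$ is a bijection; together with essential surjectivity (already recorded, since every holomorphic equivariant bundle is isomorphic, even just continuously-$\T^n$-equivariantly by Heinzner--Kutzschebauch, to some $\widetilde V$, which is algebraic) this gives the equivalence. The statement about equivariant connections, which the theorem in the introduction also claims, is handled the same way once one checks that an equivariant connection on $\widetilde V$ is the datum of finitely many Laurent-monomial coefficient functions, to which the same rigidity applies; I would remark on this but the bundle case is the heart of it.

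The main obstacle I anticipate is not the function-theoretic rigidity (which is elementary) but the bookkeeping needed to pass cleanly from ``abstract equivariant bundle'' to the model $\widetilde V$ while keeping track of morphisms: one must make sure that a holomorphic equivariant map between two bundles that happen to be algebraically trivialized is computed by the above recipe, i.e.\ that choosing algebraic equivariant trivializations is harmless. This is where invoking Theorem \ref{equivalence sheaves and filtrations} and the preceding proposition pays off: every holomorphic equivariant bundle on $\C^n$ is already, as a holomorphic equivariant bundle, isomorphic to $\widetilde V^{\operatorname{hol}}$ for the graded vector space $V$ obtained by restriction to $0$, so both source and target of any morphism may be assumed to be of this form simultaneously, and then the computation above applies verbatim.
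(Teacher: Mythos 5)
Your proposal is correct and follows essentially the same route as the paper: essential surjectivity is quoted from the preceding classification results (ultimately Heinzner--Kutzschebauch), faithfulness is immediate, and fullness is obtained by reducing both bundles to the trivialized form $\widetilde V$, $\widetilde W$ and observing that an equivariant holomorphic section of $\widetilde{\Hom(V,W)}$ is forced to be a polynomial (the paper's Lemma \ref{invariant sections of trivialized bundle}). The only cosmetic difference is that you verify this rigidity by comparing power-series coefficients, whereas the paper restricts to the dense orbit where the section is determined by its value at $(1,\dots,1)$; both are one-line arguments for the same fact.
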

\begin{proof}
	We already know that it is essentially surjective by the previous considerations. It is also obviously faithful, so what remains to be checked is that it is full. Consider a map of equivariant holomorphic bundles $\V\longrightarrow \W$. Without loss of generality, we may assume $\V=\widetilde{V},\W=\widetilde{W}$ for some $(\C^\times)^n$-representations  $V,W$. In this case, the statement follows from the following Lemma applied to the bundle $\Hom(\V,\W)$.
	\begin{lem}\label{invariant sections of trivialized bundle}
		Let $V$ be an object of $\Rep^{\operatorname{hol}}((\C^\times)^n)$. For any equivariant section $z\mapsto (z,s(z))$ of the bundle $\widetilde{V}^{\operatorname{hol}}$, there are a finite subset $I\subseteq\Z_{\geq0}^n$ and elements $v_p\in V^{-p}$ for all $p\in I$ s.t.
		\[
		s=\sum_{p\in I}z^pv_p
		\]
	\end{lem}
	It suffices to check this equality on the dense open subset $U=(\C^{\times})^n\subseteq \C^n$. Since $(\C^\times)^n$ acts simply transitively on $U$, a section is determined by its value $s(1,...,1)=\sum_{p\in I} v_p$. Because the section extends to the whole of $\C^n$, necessarily $I\subseteq\Z^n_{\geq 0}$. This implies the Lemma and consequently the Proposition.
\end{proof}
\begin{rem}
	The other two forgetful functors are still faithful and essentially surjective, but no longer full.
\end{rem}
\begin{rem}
	Using the whole diagram $(\ast)$ and in particular \cite{heinzner_equivariant_1995} to show essential surjectivity of analytification seems to be quite an overkill. It would be interesting to see a direct derivation of the classification of isomorphism classes in $\Bun^{\operatorname{hol}}(\C^n,(\C^\times)^n)$. 
\end{rem}
\begin{thm}\label{toric GAGA}
	For any smooth toric variety $X$, analytification yields an equivalence of categories
	\begin{center}
		$\left\{\parbox{4.3cm}{\centering equivariant algebraic vector bundles on $X$} \right\}\longleftrightarrow\left\{ \parbox{3.9cm}{\centering equivariant holomorphic vector bundles on $X^{\operatorname{an}}$} \right\}$
	\end{center}
\end{thm}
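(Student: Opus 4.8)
The plan is to reduce the case of an arbitrary smooth toric variety $X$ to the affine pieces $\A^n$, $\G_m\times\A^{n-1}$, etc., that were treated in Proposition \ref{toric GAGA affine case}, and then glue. Recall that a smooth toric variety is covered by torus-invariant affine opens of the form $U_\sigma\cong \G_m^s\times\A^{n-s}$ indexed by the cones $\sigma$ of the fan, and that the intersections $U_\sigma\cap U_\tau$ are again torus-invariant affine opens of the same shape (corresponding to the common face $\sigma\cap\tau$). Since the statement of Proposition \ref{toric GAGA affine case} was phrased for $\C^n$ with its full torus $(\C^\times)^n$, I would first record the mild generalisation that analytification is an equivalence of categories $\Bun^{\operatorname{alg}}(\G_m^s\times\A^{n-s},(\C^\times)^n)\to\Bun^{\operatorname{hol}}((\C^\times)^s\times\C^{n-s},(\C^\times)^n)$; this follows by the same argument, using that $\G_m^s\times\A^{n-s}$ is still equivariantly a product of a torus with affine space, so that the proof of Lemma \ref{invariant sections of trivialized bundle} goes through with $I$ now a finite subset of $\Z^s\times\Z_{\geq0}^{n-s}$, and every equivariant bundle is still trivial (the $\G_m^s$-factor is a torsor, and on $\A^{n-s}$ one uses the contractibility/Stein argument as before, or simply Theorem \ref{equivalence sheaves and filtrations} in the $n$-splittable variant).

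Next I would set up the descent. Fix a finite cover $\{U_i\}$ of $X$ by torus-invariant affine opens as above, with $U_{ij}:=U_i\cap U_j$ and $U_{ijk}$ triple intersections, all again of the allowed shape. Given an equivariant holomorphic bundle $\V$ on $X^{\operatorname{an}}$, restrict to each $U_i^{\operatorname{an}}$ and apply the affine equivalence to obtain equivariant algebraic bundles $E_i$ on $U_i$ together with equivariant holomorphic isomorphisms $E_i^{\operatorname{an}}\cong\V|_{U_i^{\operatorname{an}}}$; on $U_{ij}$ these furnish equivariant holomorphic isomorphisms $\varphi_{ij}\colon E_j^{\operatorname{an}}|_{U_{ij}^{\operatorname{an}}}\to E_i^{\operatorname{an}}|_{U_{ij}^{\operatorname{an}}}$. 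Because analytification on the affine open $U_{ij}$ is \emph{fully faithful} (again Proposition \ref{toric GAGA affine case} in its $\G_m^s\times\A^{n-s}$ form), each $\varphi_{ij}$ is the analytification of a unique equivariant algebraic isomorphism $\psi_{ij}\colon E_j|_{U_{ij}}\to E_i|_{U_{ij}}$, and — using fully faithfulness on the triple overlaps $U_{ijk}$ — the cocycle identity $\varphi_{ik}=\varphi_{ij}\varphi_{jk}$ descends to $\psi_{ik}=\psi_{ij}\psi_{jk}$. Equivariant descent for quasi-coherent sheaves along the Zariski cover $\{U_i\}$ then glues the $(E_i,\psi_{ij})$ to an equivariant algebraic bundle $E$ on $X$ with an equivariant holomorphic isomorphism $E^{\operatorname{an}}\cong\V$; this proves essential surjectivity. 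Full faithfulness of analytification on $X$ itself is easier: a map of equivariant bundles on $X^{\operatorname{an}}$ restricts to a compatible family of maps on the $U_i^{\operatorname{an}}$, each of which by the affine case comes from a unique algebraic map on $U_i$, and these agree on overlaps by the affine case applied to $U_{ij}$, hence glue; injectivity is the same argument with the zero map.

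The main obstacle, and the step deserving the most care, is \emph{equivariant} descent: one must check that gluing algebraic bundles with transition data that is compatible with the $\G_m^n$-action produces a bundle whose glued equivariant structure (the isomorphism $\Phi$ of sheaves over $\G_m^n\times X$) is well-defined and satisfies the cocycle condition of Definition \ref{cocycle-condition}. This is formally a consequence of faithfully flat descent applied to the cover $\{\G_m^n\times U_i\}$ of $\G_m^n\times X$ simultaneously with $\{U_i\}$ of $X$ — equivalently, descent in the category of equivariant quasi-coherent sheaves, which is standard — but since the whole point of the theorem (as stressed in the introduction) is that the equivariant structure lives over the non-projective $\G_m^n\times X$, it is worth spelling out that Zariski descent is all that is needed here and no properness enters. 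A secondary point to verify is that restriction of an equivariant bundle to a torus-invariant open is again equivariant for the same group, which is immediate from functoriality of pullback, and that the affine equivalence is compatible with these restrictions along inclusions $U_{ij}\hookrightarrow U_i$, which follows from Lemma \ref{Restriction properties of the affine Rees bundle} together with the fact that analytification commutes with restriction to opens.
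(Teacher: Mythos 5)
Your proposal is correct and follows essentially the same route as the paper: reduce to the torus-invariant affine charts $\G_m^s\times\A^{n-s}$ (the paper handles these via the formal equivalence between $(\C^\times)^{n+m}$-equivariant bundles on $\C^n\times(\C^\times)^m$ and $(\C^\times)^n$-equivariant bundles on $\C^n$, which is equivalent to your direct re-run of the affine argument), and then glue over the cover. The paper leaves the equivariant Zariski descent step implicit in a single sentence, whereas you spell it out; no substantive difference.
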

\begin{proof}
	For any groups $G,G'$ and $G$-space $X$, one obtains by formal nonsense an identification of the $G\times G'$-equivariant bundles over $X\times G'$ and the $G$-equivariant bundles over $X$, regardless wether one considers this in the algebraic or holomorphic category. In particular, there is an equivalence
	\[
	\Bun^{\omega}(\C^{n}\times (\C^\times)^m,(\C^\times)^{n+m})\longrightarrow\Bun^{\omega}(\C^n,(\C^\times)^n)
	\]
	for $\omega=\operatorname{alg},\operatorname{hol}$ and analytification gives an equivalence of algebraic and holomorphic equivariant bundles over $\C^n\times(\C^\times)^m$. One concludes by noting that every toric variety is covered by open sets equivariantly isomorphic to $\C^n\times (\C^\times)^m$ for some $n,m\in\Z_{\geq 0}$, the intersections of which have again this form.\footnote{See \cite[thm. 3.1.19, thm. 1.3.12, ex. 1.2.21]{cox_toric_2011}}
\end{proof}

\begin{cor}\label{GAGA for connections}
	Let $X$ be a smooth toric variety. Analytification yields an equivalence of categories
	\begin{center}
		$\left\{\parbox{4.3cm}{\centering equivariant algebraic vector bundles with an equivariant connection on $X$} \right\}\longleftrightarrow\left\{ \parbox{3.8cm}{\centering equivariant holomorphic vector bundles with an equivariant connection on $X^{an}$} \right\}.$
	\end{center}
	It restricts to an equivalence of the subcategories of flat connections.
\end{cor}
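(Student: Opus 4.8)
The plan is to bootstrap from Theorem~\ref{toric GAGA}, using that an equivariant vector bundle equipped with an equivariant connection is an object of a \emph{stack} on $X$ for the Zariski topology: a connection is determined by its restrictions to an open cover, local connections agreeing on overlaps patch together, and the Leibniz rule as well as the compatibility with the equivariant structure can both be checked locally on $X$. Every smooth toric variety is covered by torus-invariant affine opens $U\cong\C^{n}\times(\C^\times)^{m}$ whose pairwise and triple intersections again have this form (this is exactly the input used in the proof of Theorem~\ref{toric GAGA}), so it suffices to show that analytification gives an equivalence
\[
\{\text{equivariant algebraic bundle with equivariant connection on }U\}\longrightarrow\{\text{equivariant holomorphic bundle with equivariant connection on }U^{\operatorname{an}}\},
\]
naturally in $U$; the corollary then follows by comparing the resulting descent data, and once the first equivalence is known the statement on flat connections is immediate, because the curvature $\nabla\circ\nabla$ is $\Oh_X$-linear, commutes with analytification, and vanishes after analytification if and only if it vanishes (analytification being faithful on $\Hom_{\Oh_X}(E,E\otimes\Omega_X^{2})$).

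Over such a chart I would first establish that every equivariant algebraic bundle $E$ admits an equivariant algebraic connection. By Theorem~\ref{equivalence sheaves and filtrations} and Proposition~\ref{characterisation splittable filtrations}, together with the identification $\Bun^{\operatorname{alg}}(\C^{n}\times(\C^\times)^{m},(\C^\times)^{n+m})\simeq\Bun^{\operatorname{alg}}(\C^{n},(\C^\times)^{n})$ from the proof of Theorem~\ref{toric GAGA}, such an $E$ splits as a direct sum of equivariant line bundles $\Oh_{\A^n}(p)$ (twisted by a character in the $(\C^\times)^m$-directions, which only shifts the grading). On $\Oh_{\A^n}(p)=z^{-p}\C[z_1,\dots,z_n]$ the assignment $z^{-p}\mapsto 0$ extended by the Leibniz rule is a flat connection --- it is $d$ read through the monomial trivialisation $\Oh_{\A^n}(p)\cong\Oh_{\A^n}$ --- and it respects the grading precisely because $\deg(dz_i)=\deg(z_i)$; adding the pullback of the de Rham differential along the $(\C^\times)^m$-factor handles the remaining variables, and summing over the summands of $E$ gives what is wanted. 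Then I would check the chart-level equivalence: faithfulness is clear since analytification is injective on morphisms of coherent sheaves; for fullness, a horizontal holomorphic morphism $f$ comes from a unique algebraic bundle morphism $g$ by Theorem~\ref{toric GAGA}, and the obstruction $\nabla'\circ g-(g\otimes\Id)\circ\nabla$ to $g$ being horizontal is $\Oh_U$-linear (the Leibniz terms cancel) and equivariant, hence an equivariant morphism $E\to F\otimes\Omega_U$ of algebraic bundles, which vanishes after analytification and therefore vanishes; for essential surjectivity, given a holomorphic $(\mathcal E,\nabla)$ I would write $\mathcal E\cong E^{\operatorname{an}}$ via Theorem~\ref{toric GAGA}, equip $E$ with some equivariant algebraic connection $\nabla_1$ by the previous step, and observe that $\nabla-\nabla_1^{\operatorname{an}}$ is an $\Oh$-linear equivariant endomorphism-valued one-form, hence the analytification of a unique algebraic one $\theta$, so that $\nabla_1+\theta$ is an equivariant algebraic connection on $E$ analytifying to $\nabla$. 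All of these constructions being canonical, they are compatible with restriction to smaller invariant opens, and the descent argument closes the proof.

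The step I expect to need the most care is not any single computation but the bookkeeping of which objects are $\Oh_X$-linear: connections themselves are not, yet all the differences and obstructions appearing above (differences of connections, the non-horizontality defect, the curvature) are $\Oh_X$-linear morphisms between honest bundles, and it is precisely to these that Theorem~\ref{toric GAGA} is applied --- so one must set up the stack/descent formalism and the $\Oh$-linear reformulations carefully. By contrast, the existence of equivariant connections on the basic charts, although indispensable, is elementary once the splitting of equivariant bundles over $\C^{n}\times(\C^\times)^{m}$ is invoked.
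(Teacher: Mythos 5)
Your proposal is correct and follows essentially the same route as the paper: reduce to the charts $\C^n\times(\C^\times)^m$, note that fullness/faithfulness and the flatness criterion come for free from the bundle-level statement since differences of connections and curvatures are $\Oh$-linear, and obtain essential surjectivity by comparing a given holomorphic connection with a reference equivariant algebraic one. The only cosmetic difference is that you algebraize the resulting invariant $\End$-valued one-form by invoking fullness in Theorem~\ref{toric GAGA}, whereas the paper expands it explicitly in monomials as in Lemma~\ref{invariant sections of trivialized bundle} --- the same mechanism either way.
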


\begin{proof}
	That the functor is fully faithful follows from the corresponding statement for bundles without a connection. In fact, a map between algebraic vector bundles with connection is compatible with the connections if and only if its analytification is. Similarly, a connection is flat if and only if its analytification is. For essential surjectivity, we can assume that $X=\C^n\times(\C^{\times})^m$ with action by multiplication of $(\C^\times)^{n+m}$ and that a holomorphic bundle $\V$ is given in trivialized form, i.e., as $\V=\pr_{\C^n}^*\widetilde{\V(0)}\otimes\pr_{(\C^\times)^m}^*\Oh_{(\C^{\times})^m}$. One checks that on these (trivial) bundles the canonical connection $d$ is equivariant, so any connection is given as
	\[
	\nabla=d+\Omega,
	\]
	where $\Omega$ is a global invariant holomorphic $1$-form on $\C^n\times(\C^\times)^m$ with values in the vector space ${\End(\V(0))}$. Arguing as in the proof of Lemma \ref{invariant sections of trivialized bundle}, such a form can be written as
	\[
	\Omega=\sum_{p\in\Z^n_{\geq 0}}\sum_{i=1}^{n+m} A_{p,i}z_1^{p_1}\cdot...\cdot z_n^{p_n}\frac{dz_i}{z_i}
	\]
	
	where the $A_{p,i}$ are endomorphisms of $\V(0)$ of multidegree $-p$ (which are taken to be zero if they would cause a pole, i.e., if $p_i=0$). In particular, the connection is algebraic.
\end{proof}
For flat connections, there is also a nicer comparison to representations, which also accounts for the smooth case. Let $\Bun^{\omega}_{\nabla}(\C^n,(\C^\times)^n)$ with $\omega\in\{\operatorname{alg},  \operatorname{hol}\}$ denote the category of $(\C^\times)^n$ algebraic or holomorphic equivariant vector bundles with an equivariant connection and denote by $\Bun^{\omega}_{\nabla^\flat}(\C^n,(\C^\times)^n)$ the respective subcategories of flat connections.
Let $\Bun_{\nabla^\flat}^{\operatorname{sm}}(\C^n,\T^n)$ be the category of smooth equivariant vector bundles with flat equivariant connections. As above, there is a commutative diagram:
\[\tag{$\ast\ast$}
\begin{tikzcd}
\Bun^{\operatorname{alg}}_{\nabla^\flat}(\C^n,(\C^\times)^n)\ar[r]\ar[d]&\Rep^{\operatorname{alg}}((\C^\times)^n)\ar[d]\\
\Bun^{\operatorname{hol}}_{\nabla^\flat}(\C^n,(\C^\times)^n)\ar[r]\ar[d]&\Rep^{\operatorname{hol}}((\C^\times)^n)\ar[d]\\
\Bun^{\operatorname{sm}}_{\nabla^\flat}(\C^n,\T^n)\ar[r]&\Rep^{\operatorname{sm}}(\T^n)
\end{tikzcd}
\]
Here, horizontal arrows are again restriction to $0$ and vertical ones forget about the stronger regularity conditions imposed.
\begin{prop}
	In the diagram $(\ast\ast)$, all arrows are equivalences of categories.
\end{prop}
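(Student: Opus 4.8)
The plan is to reduce the statement to the three horizontal functors (restriction to the fibre over $0$) and then to prove each of these is an equivalence by showing that every flat equivariant connection is gauge-equivalent to the trivial one. First I would observe that the right-hand column of $(\ast\ast)$ consists of the first three arrows of the middle column of $(\ast)$, which are already known to be equivalences of categories (they are compatible with the equivalences onto $\Gra^n_\C$ given by the diagonal arrows). Hence, by the two-out-of-three property applied to the two commuting squares of $(\ast\ast)$, it suffices to treat the horizontal arrows $(\V,\nabla)\mapsto(\V(0),\nabla|_0)$; once these are equivalences, the left-hand column follows, again by two-out-of-three. Essential surjectivity of the horizontal functors is immediate: to a representation $(V,\rho)$ one attaches the trivial bundle $\widetilde V$ with its canonical connection $d$, which is equivariant (as observed in the proof of Corollary \ref{GAGA for connections}), manifestly flat, and restricts to $(V,\rho)$ over $0$.

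The substance is the claim that every object $(\V,\nabla)$ of $\Bun^\omega_{\nabla^\flat}(\C^n,G)$ is isomorphic, as a bundle with connection, to $(\widetilde{\V(0)},d)$. Granting this, full faithfulness is formal: one may assume both source and target are of this trivialized form, and then an equivariant bundle map $\widetilde V\to\widetilde W$ is compatible with the connections $d$ precisely when it is fibrewise constant (since $\C^n$ is connected, the vanishing of its differential is vanishing everywhere), and a constant equivariant bundle map is nothing but a morphism of representations $V\to W$; as this identification is restriction to $0$, the functor is fully faithful, and with essential surjectivity it is an equivalence.

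To prove the claim I would first invoke the classification of isomorphism classes recalled earlier (Theorem \ref{equivalence sheaves and filtrations} and Proposition \ref{characterisation splittable filtrations} in the algebraic case, the Heinzner--Kutzschebauch theorem and equivariant contractibility in the holomorphic and smooth cases) to reduce to $\V=\widetilde{\V(0)}$, so that $\nabla=d+\Omega$ with $\Omega$ an invariant $\End(\V(0))$-valued $1$-form of the given regularity. Exactly as in the proof of Corollary \ref{GAGA for connections}, invariance forces
\[
\Omega=\sum_{p\in\Z^n_{\geq0}}\sum_{i=1}^n A_{p,i}\,z^p\,\frac{dz_i}{z_i},\qquad A_{p,i}\in\End(\V(0))\ \text{of multidegree}\ {-p},\ A_{p,i}=0\ \text{if}\ p_i=0,
\]
and I would look for an equivariant gauge transformation $g$ with $g(0)=\Id$ satisfying $g^{-1}\nabla g=d$, i.e. $dg=-\Omega g$. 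Writing $\Omega=\sum_i B_i\,dz_i$ with each $B_i$ a polynomial $\End(\V(0))$-valued map, this is the linear system $\partial_i g=-B_i g$ whose integrability condition is exactly the flatness of $\nabla$; hence it has a unique formal power series solution with $g(0)=\Id$. Because $d$, $\Omega$ and the initial value are equivariant, uniqueness forces $g$ to be equivariant, so the coefficient of $z^r$ in $g$ lies in $\End(\V(0))^{-r}$; as $\V(0)$ is finite-dimensional and graded, this space vanishes for all but finitely many $r$, so $g$ is a polynomial — algebraic, hence holomorphic and smooth — giving the trivialization uniformly for $\omega\in\{\operatorname{alg},\operatorname{hol},\operatorname{sm}\}$.

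The main obstacle is exactly this last point in the algebraic category: a priori, integrating a flat connection yields a transcendental gauge transformation, and what saves us is the observation that equivariance confines $\Omega$ to (forms with values in) a nilpotent Lie algebra — equivalently, that the unique formal solution $g$ is supported in finitely many monomials — so that $g$ is in fact polynomial. Once this is in place, the reduction via $(\ast)$, essential surjectivity, and the passage from the trivialized picture to full faithfulness are all routine.
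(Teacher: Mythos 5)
Your route is genuinely different from the paper's: the paper gets the algebraic row from Corollary \ref{GAGA for connections} and then handles the holomorphic and smooth horizontal arrows together by passing to local systems (sending $(\V,\nabla)$ to $\ker\nabla$ and using that $\C^n$ is simply connected, so restriction to any point is an equivalence), whereas you trivialize the connection by an explicit gauge transformation. For $\omega\in\{\operatorname{alg},\operatorname{hol}\}$ your argument is correct and has independent value: the observation that $(\C^\times)^n$-invariance confines $\Omega$ to endomorphisms of strictly negative multidegree, hence to a nilpotent subalgebra of $\End(\V(0))$, so that the unique formal solution of $dg=-\Omega g$, $g(0)=\Id$ is supported on the finitely many $r$ with $\End(\V(0))^{-r}\neq 0$ and is therefore a polynomial (and everywhere invertible, $g-\Id$ being nilpotent-valued), yields a self-contained algebraic proof that does not route through the GAGA corollary.

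The gap is in the smooth row. There the group is only $\T^n$, and a $\T^n$-invariant smooth $\End(\V(0))$-valued $1$-form is \emph{not} of the shape $\sum_{p,i}A_{p,i}z^p\,dz_i/z_i$: since the connection is only smooth, $\Omega$ has $d\bar z_i$-components, and $(S^1)^n$-invariance permits coefficients such as $z^p\bar z^q f(|z_1|^2,\dots,|z_n|^2)$ with $f$ an arbitrary smooth function, because $|z_i|^2$ is $\T^n$-invariant. So the step ``invariance forces $\Omega$ to be polynomial, hence $g$ is polynomial'' does not apply, and the claimed uniform treatment of $\omega=\operatorname{sm}$ fails as stated. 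The statement is still true there, but it needs a different argument --- e.g. flatness plus simple connectivity give a parallel-transport solution $g$ of $dg=-\Omega g$ with $g(0)=\Id$, which is equivariant by uniqueness; this is essentially the paper's local-systems argument. With that repair (or by quoting the paper's argument for the holomorphic and smooth rows and retaining your polynomial-gauge argument as an independent proof of the algebraic one), the proof is complete; the reduction via $(\ast)$, essential surjectivity, and the full-faithfulness bookkeeping in your write-up are all fine.
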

\begin{proof}
	Since we already know the functors in the right column and the one from algebraic to holomorphic bundles to be an equivalence, it suffices to show that restriction to the fixed point is an equivalence in the holomorphic and smooth cases. We do this in the holomorphic case, the smooth case works the same way.\\
	
	Sending $(V,\nabla)$ to $\ker \nabla$ is an equivalence of categories between flat equivariant connections and equivariant local systems on $\C^n$. Since $\C^n$ is contractible, any local system is necessarily trivial. So restriction from global sections to any point is an isomorphism. In particular, restriction to the fixed point $0$ induces an equivalence of categories with $\Rep^{\operatorname{hol}}((\C^\times)^n)$.\\
	
	For completeness, here is an explicit description of the pseudo-inverse to the restriction functor in $(\ast\ast)$: For any representation $(V,\rho)$ in $\Rep^{\operatorname{hol}}((\C^\times)^n)$, as before, consider the bundle $\widetilde{V}$ with product action. Its sheaf of sections $V\otimes_\C\Oh_{\C^n}$ is equipped with the canonical equivariant connection $d$ given by 
	\[
	d(v\otimes f)=v\otimes df
	\]
	and sending a map $f:(V,\rho_V)\longrightarrow (W,\rho_W)$ of representations to $f\otimes\Id$ this defines the pseudo-inverse
	\[
	\Rep^{\operatorname{hol}}((\C^\times)^n)\longrightarrow \Bun_{\nabla^\flat}(\C^n,(\C^\times)^n).
	\]
\end{proof}
\begin{rem}
An early impetus for the questions treated in this section came from studying Kapranov's proof \cite{kapranov_real_2012} of the equivalence between (complex) Mixed Hodge Structures and algebraic toric vector bundles with an equivariant connection on $\C^2$. A crucial step in the proof consists in an application of the equivariant Radon-Penrose transform, that yields an equivalence between equivariant holomorphic vector bundles with a connection on $\C^2$ and a certain holomorphic toric vector bundle on $\Pro^2_\C\setminus\{[1,0,0]\}$. The latter are then related to triples of opposed filtrations via the Rees-bundle construction. One then has to check that if a bundle on one side of the equivalence is algebraic, so is its counterpart on the other side. By Theorem \ref{toric GAGA} and Corollary \ref{GAGA for connections} this last step is in a certain sense redundant: The algebraic and holomorphic categories on both sides are equivalent.
\end{rem}
\section{The Fr\"olicher approximating bundle}
Denote by $(C_{\Cdot}^\infty(X,\C),d)$ the complex of $\C$-vector spaces given by complex-valued differential forms and exterior differentiation. This complex carries a filtration given by 
\[
F^pC_{\Cdot}^\infty(X,\C):=\bigoplus_{\substack{r+s=\Cdot\\r\geq p}}C_{p,q}^\infty(X,\C)
\]
It is a well-known result that this induces the so-called Fr\"olicher spectral sequence, of the form
\[
E_1^{p,q}(X)=H^q(X,\Omega^p)\Longrightarrow H_{dR}^{p+q}(X,\C).
\]

In \cite{popovici_adiabatic_2019} (from which we adopt some notations in this section), Dan Popovici associates with every compact connected complex manifold $X$  a holomorphic vector bundle, called the \textbf{Fr\"olicher approximating vector bundle} (short FAVB), denoted $\cA^k$ on $\C$ together with distinguished isomorphisms of the fibres:

\[\tag{$\ast$}
\psi_h:\cA^k_h\cong \begin{cases}H^k_{dR}(X,\C)&\text{if }h\neq 0\\\bigoplus_{p+q=k}E_{\infty}^{p,q}(X) &\text{if } h=0\end{cases}
\]

Here, $E_\infty(X)$ denotes the limiting page of the Fr\"olicher spectral sequence of $X$. Recall that convergence of the spectral sequence means there the separated and exhaustive descending filtration induced by $F$ on $H_{dR}^k(X,\C)$, also denoted $F$ and called the Hodge-filtration, satisfies $E^{p,q}_{\infty}=\gr_F^{p}H_{dR}^{p+q}$. Further, $E_\infty^{p,q}\cong E_r^{p,q}$ for any $r$ with the property that all differentials entering or leaving the bidegree $(p,q)$ are zero from page $E_r$ onward. In particular, one could replace $\infty$ in the statement with the minimal $r_0$ s.t. the spectral sequence degenerates (e.g. one always has $r_0\leq\dim_\C X$).\\

Popovici's construction depends on the choice of a metric  on $X$. Let us briefly recall it: The main point consists in contructing a $C^\infty$-family of Laplace-type pseudo-differential operators $(\tilde{\Delta}_h)_{h\in\C}$ on the space of $k$-forms $C^\infty_k(X,\C)$. This implies there kernels form a $C^\infty$-vector bundle over $\C$. Then one computes that inclusion and projection induce isomorphisms
\[\tag{$\ast\ast$}
\ker\tilde{\Delta}_h\cong\begin{cases}H_{d_h}^k(X,\C)&\text{if }h\neq 0\\\bigoplus_{p+q=k}E_{\infty}^{p,q}(X)&\text{if }h=0.\end{cases}
\]
Here, $H_{d_h}(X,\C)$ denotes cohomology with respect to the `twisted' differential $d_h=h\del+\delbar$. For every $0\neq h\in\C$, the map
\begin{align*}
\theta_h:C^\infty_k(X,\C)&\longrightarrow C^\infty_k(X,\C)\\
\sum_{p+q=k}\omega^{p,q}&\longmapsto \sum_{p+q=k} h^p\omega^{p,q}
\end{align*}
induces isomorphisms $H_{dR}^k(X,\C)\cong H^k_{d_h}(X,\C)$. This varies holomorphically in $h$, hence the bundle $\ker\tilde{\Delta}$ is holomorphic over $\C^\times$ (even a constant local system). Therefore, it is necessarily also holomorphic over the whole of $\C$ by Riemann's removable singularity theorem.\\

We show now that $\cA^k$ can be identified with a Rees-bundle. Slightly abusing notation, we identify $\cA^k$ with its sheaf of sections.

\begin{thm}\label{Popovici and Rees}
Let $X$ be a compact connected complex manifold and $\cA^k$ the $k$-th Fr\"olicher approximating bundle. There is a canonical isomorphism
\[
\cA^k\cong \xi_{\A^1}(H_{dR}^k(X,\C),F)^{\an}
\]
where $F$ denotes the Hodge filtration and the superscript $^{\an}$ means analytification.
\end{thm}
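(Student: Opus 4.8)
The plan is to compute the Rees bundle $\xi_{\A^1}(H_{dR}^k(X,\C),F)$ explicitly over the algebraic torus and over the origin, match these with Popovici's descriptions, and then use the rigidity established above (bundles on $\C$ are determined by gluing data / extend uniquely) to produce a canonical isomorphism. First I would recall that $H_{dR}^k(X,\C)$ with the single Hodge filtration $F$ is an object of $\Fil^1_{\C}=\Fil^{1,splittable}_{\C}$, so by Proposition \ref{characterisation splittable filtrations} (with $n=1$) the bundle $\xi_{\A^1}(H^k_{dR},F)$ is locally free, and any splitting $H^k_{dR}=\bigoplus_p V^p$ of $F$ gives $\xi_{\A^1}(H^k_{dR},F)\cong\bigoplus_p V^p\otimes_\C\Oh_{\A^1}(p)$. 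By Lemma \ref{Restriction properties of the affine Rees bundle}(1) its restriction to $\G_m$ is canonically $\Oh_{\G_m}\otimes_\C H^k_{dR}$, and by part (2) its fibre at $0$ is canonically $\bigoplus_p D^{-p}$ with $D^p=F^p/F^{p+1}=\gr_F^p H^k_{dR}$; by degeneration of the Fr\"olicher spectral sequence at some finite page this equals $\bigoplus_{p+q=k}E_\infty^{p,q}(X)$. Thus the two sides of the claimed isomorphism have, after analytification, the same fibres, with identifications matching Popovici's $(\ast)$ exactly.

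Next I would make the isomorphism itself canonical rather than merely fibrewise. The cleanest route is to exhibit an explicit global holomorphic frame / trivialization of $\xi_{\A^1}(H^k_{dR},F)^{\an}$ in terms of harmonic-type representatives and compare with Popovici's $\ker\tilde\Delta$. Concretely: a splitting of $F$ can be chosen compatibly with a Hodge-theoretic decomposition, and the summand $V^p\otimes\Oh(p)$ has the frame $z^{-p}$ (in the notation $\Oh_{\A^1}(p)=z^{-p}\C[z]\subseteq\C[z^{\pm1}]$); reading this through the analytification and through the map $\theta_h$ that rescales the $(p,q)$-component by $h^p$, a section $\sum_p z^{-p}v_p$ of the Rees bundle corresponds on the fibre over $h\neq 0$ to $\sum_p h^{-p}v_p\in H^k_{dR}\cong H^k_{d_h}$, i.e. precisely to the trivialization that Popovici uses to extend $\ker\tilde\Delta$ holomorphically across $0$. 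So I would define the comparison map over $\G_m$ using $\psi_h$ and $(\ast\ast)$, check it is $\C^\times$-equivariant and holomorphic there, and then argue it extends to an isomorphism of holomorphic vector bundles on all of $\C$.

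For the extension across $0$ I would use that both bundles are holomorphic vector bundles on $\C$, hence by Riemann's removable singularity theorem (already invoked for $\cA^k$ in the excerpt) a bundle map defined and bounded near $0$ extends holomorphically; boundedness follows because in the chosen frames the transition matrices are polynomial in $z$ on one side and, on Popovici's side, the rescaling $\theta_h$ is exactly what was used to holomorphically extend $\ker\tilde\Delta$, so the composite is holomorphic at $0$ by construction. Finally I would check the extended map induces on the central fibre the identifications already matched above ($\bigoplus_p\gr_F^{-p}\cong\bigoplus_{p+q=k}E_\infty^{p,q}$), so it is an isomorphism there too, hence an isomorphism of bundles; canonicity follows since every choice (splitting of $F$, Hodge representatives) only affects the map by automorphisms that are themselves induced by the canonical fibrewise identifications.

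The main obstacle I anticipate is not the algebraic side, which is essentially bookkeeping via the two lemmas, but reconciling the two ways the holomorphic structure near $h=0$ is produced: showing that Popovici's analytically-defined holomorphic extension of $\ker\tilde\Delta$ across the origin agrees, under $\psi_h$ and $\theta_h$, with the algebraic extension coming from the polynomial frames $z^{-p}$ of $\Oh_{\A^1}(p)$. This requires being careful that the isomorphism $H^k_{dR}\cong H^k_{d_h}$ via $\theta_h$ is compatible with the filtration $F$ in the precise graded sense that makes the powers of $h$ match the filtration degrees, and that no extra holomorphic gauge transformation is hidden in the passage from $\ker\tilde\Delta_h$ to $H^k_{d_h}$. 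Once that compatibility is pinned down, the removable-singularity argument and the fibre computations close the proof.
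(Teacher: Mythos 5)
Your fibre computations over $\G_m$ and over $0$ are correct, and the overall shape (match the fibres, produce a map over $\C^\times$, extend across $0$) could in principle be made to work. But it is a genuinely different route from the paper, and the step where you locate ``the main obstacle'' is exactly the step your proposal does not actually supply. The paper sidesteps the extension problem entirely: it applies the Rees construction to the \emph{whole de Rham complex} with its filtration by holomorphic degree (Lemma \ref{base-change}), observes that after the canonical trivialization $\alpha_k$ the differential $\xi_{\A^1}(d)$ becomes precisely Popovici's twisted differential $d_z=z\del+\delbar$ on $C^\infty_k(X,\C)\otimes\Oh_\C$, and then uses the inclusion $\ker\tilde{\Delta}_z\subseteq\ker d_z$ followed by projection to $H^k(A^{\an}_\Cdot,d^{\an}_\xi)\operatorname{mod}T\cong\xi_{\A^1}(H^k_{dR},F)^{\an}$ (Proposition \ref{maps between filtered spaces vs. maps between Rees-bundles}). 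This produces the comparison map on all of $\C$ at once, at the level of cochain representatives, so no removable-singularity or boundedness argument is needed; one only checks it is a fibrewise isomorphism via $(\ast\ast)$.

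The concrete gap in your version is the sentence claiming the composite is ``holomorphic at $0$ by construction'' because ``the rescaling $\theta_h$ is exactly what was used to holomorphically extend $\ker\tilde\Delta$.'' That is not how Popovici's bundle acquires its holomorphic structure at $0$: the map $\theta_h$ only trivializes the bundle over $\C^\times$ (and degenerates as $h\to 0$); the extension across $0$ comes from the smoothness of the family $\tilde{\Delta}_h$ at $h=0$ plus boundedness. So your comparison map over $\C^\times$ is a composite of two trivializations that each blow up at $0$, and showing that the composite (and its inverse) stays bounded is a real assertion --- essentially that for a holomorphic frame $u_h$ of $\ker\tilde{\Delta}_h$ near $0$, the classes $[\theta_h^{-1}u_h]=[\sum_p h^{-p}u_h^{p,q}]$ lie in $\sum_p h^{-p}F^pH^k_{dR}\otimes\Oh_{\C,0}$ and hit a basis of $\bigoplus_p\gr_F^pH^k_{dR}$ at $h=0$. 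Proving that statement is exactly the filtration bookkeeping that the paper's Lemma \ref{base-change} packages once and for all; without it your argument does not close. If you want to keep your route, you should prove that compatibility explicitly (note $h^{-p}u_h^{p,q}\in h^{-p}F^pC^\infty_k$, so the $d$-closed form $\theta_h^{-1}u_h$ is a section of the Rees module of the de Rham complex --- at which point you have rediscovered the paper's argument).
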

\begin{cor}
The bundle $\cA^k$ carries a (necessarily unique) algebraic and $\C^\times$-equivariant structure which is independent of $g$. The association $X\longmapsto \cA^k$ is contravariantly functorial for maps between (compact) complex manifolds.
\end{cor}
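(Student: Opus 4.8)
The plan is to deduce the Corollary formally from the canonical isomorphism of Theorem~\ref{Popovici and Rees}, transporting structure from its algebraic right-hand side onto $\cA^k$. First I would observe that the model on the right is already an algebraic $\C^\times$-equivariant vector bundle. Indeed, for a single filtration ($n=1$) Theorem~\ref{equivalence sheaves and filtrations} gives $\Fil^1_\C=\Fil^{1,splittable}_\C$, so by Proposition~\ref{characterisation splittable filtrations} the Rees-sheaf $\xi_{\A^1}(H_{dR}^k(X,\C),F)$ is a vector bundle on $\A^1=\C$, carrying the $\G_m=\C^\times$-equivariant structure coming from its grading. Its analytification is therefore a holomorphic $\C^\times$-equivariant bundle, and the canonical isomorphism of Theorem~\ref{Popovici and Rees} lets me pull this algebraic and equivariant structure back onto $\cA^k$.

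For uniqueness I would invoke the affine toric GAGA of Proposition~\ref{toric GAGA affine case}: analytification $\Bun^{\operatorname{alg}}(\C,\C^\times)\to\Bun^{\operatorname{hol}}(\C,\C^\times)$ is an equivalence, so the algebraic equivariant bundle whose analytification is a prescribed holomorphic equivariant bundle is determined up to unique isomorphism; hence the transported structure on $\cA^k$ is the only one. Independence of the metric $g$ is then automatic, because the right-hand side $\xi_{\A^1}(H_{dR}^k(X,\C),F)$ involves only the filtered vector space $(H_{dR}^k(X,\C),F)$, a purely Hodge-theoretic invariant of $X$ making no reference to $g$. Concretely, for two metrics $g_0,g_1$ the two a priori distinct versions of $\cA^k$ are each canonically identified with this same model, and composing these identifications yields a canonical isomorphism between them compatible with the transported structures.

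For functoriality I would compose functors. A holomorphic map $f\colon X\to Y$ of compact complex manifolds pulls back differential forms preserving bidegree, hence preserves the filtration $F$ and commutes with $d$, so the induced map $f^*\colon H_{dR}^k(Y,\C)\to H_{dR}^k(X,\C)$ is a morphism in $\Fil^1_\C$. Thus $X\mapsto(H_{dR}^k(X,\C),F)$ is a contravariant functor into $\Fil^1_\C$; postcomposing it with the (covariant) functorial Rees-bundle construction $\xi_{\A^1}$ and with analytification produces a contravariant functor $X\mapsto\xi_{\A^1}(H_{dR}^k(X,\C),F)^{\an}$, and transport along the canonical isomorphisms of Theorem~\ref{Popovici and Rees} defines the functorial structure on $\cA^k$. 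There is essentially no hard analytic content left, since all of it is already absorbed into Theorem~\ref{Popovici and Rees}; the only genuine verification is that $f^*$ is filtered (the standard fact that holomorphic pullback sends forms of type $(p,q)$ to forms of type $(p,q)$), and the one conceptual point to get right is that this transport of structure is legitimate, which it is precisely because the isomorphism supplied by Theorem~\ref{Popovici and Rees} is canonical and therefore compatible with all the constructions involved.
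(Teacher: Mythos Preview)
Your proposal is correct and matches the paper's approach: the paper does not give a separate proof of this Corollary, treating it as an immediate formal consequence of Theorem~\ref{Popovici and Rees}, and your argument spells out exactly that implicit reasoning (transport the algebraic $\C^\times$-structure from the Rees side, invoke Proposition~\ref{toric GAGA affine case} for uniqueness, and use functoriality of $X\mapsto (H_{dR}^k(X,\C),F)$ together with that of $\xi_{\A^1}$). The only thing to be aware of is that ``necessarily unique'' should be read as uniqueness of the algebraic structure once the holomorphic $\C^\times$-equivariant structure has been fixed via the canonical isomorphism, which is precisely how you argue it.
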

Note that the Rees-bundle construction makes perfect sense for infinite vector spaces, yielding a quasi-coherent (and for one filtration or two filtrations: locally free) sheaf. Hence, one can also omit the compactness condition and use $\xi_{\A^1}(H_{dR}^k(X,\C),F)$ as a definition of the FAVB for arbitrary complex manifolds (although one may argue that the filtration used here is not the `right' filtration to consider in the non-compact case).\\

By Lemma \ref{Restriction properties of the affine Rees bundle}, the bundle $\xi_{\A^1}(H_{dR}^k(X,\C),F)$ has the desired identifications $(\ast)$. The isomorphism with $\cA^k$ will follow from the following general `base-change-property' for Rees-bundles.

\begin{lem}\label{base-change}
Given a complex manifold $X$ and $k\in\Z$, consider the complex $(A_\Cdot,d_\xi)$ of sheaves on $\C$ defined by  $A_\Cdot=\xi_{\A^1}(C^\infty_{\Cdot}(X,\C),F)$ and $d_\xi=\xi_{\A^1}(d)$. There is a canonical identification of toric vector bundles on $\C$:
\[
H^k\big(A_\Cdot^{\an},d_\xi^{\an}\big)\operatorname{ mod }T \cong \xi_{\A^1}(H_{dR}^k(X,\C),F)^{\an}
\]
where $T$ denotes the torsion subsheaf.\\

There are canonical isomorphisms
\begin{align*}
\alpha_k:A_k\longrightarrow & ~\widetilde{C_k^\infty(X,\C)\otimes\C[z]}\\
\sum_{p+q=k}\omega^{p,q}\cdot p(z^{\pm 1})\longmapsto& \sum_{p+q=k}\omega^{p,q}p(z^{\pm 1})z^p
\end{align*} 
under which $d_\xi$ gets identified with $d_z=z\cdot(\del\otimes\Id)+(\delbar\otimes\Id)$.
\end{lem}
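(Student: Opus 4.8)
## Proof Plan for Lemma \ref{base-change}

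The plan is to establish the second assertion first (the explicit isomorphisms $\alpha_k$), since the first assertion will follow from it by a cohomology computation together with Proposition \ref{maps between filtered spaces vs. maps between Rees-bundles}. First I would unpack the definition of $A_k = \xi_{\A^1}(C_k^\infty(X,\C),F)$: its module of global sections is the Rees module $\sum_{p\in\Z} F^p C_k^\infty(X,\C)\otimes z^{-p}\C[z]$ sitting inside $C_k^\infty(X,\C)\otimes\C[z^{\pm1}]$. Because the filtration $F^p C_k^\infty = \bigoplus_{r\ge p} C^\infty_{r,s}$ is \emph{split} by the bidegree decomposition, Proposition \ref{characterisation splittable filtrations} gives $\xi_{\A^1}(C_k^\infty,F)\cong\bigoplus_{p+q=k} C^\infty_{p,q}\otimes\Oh_{\A^1}(p)$, and multiplying the $(p,q)$-component by $z^p$ is precisely an isomorphism $\Oh_{\A^1}(p)\xrightarrow{\sim}\Oh_{\A^1}$. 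Assembling these over all $p$ yields $\alpha_k$ as written, landing in the trivial bundle $\widetilde{C_k^\infty(X,\C)\otimes\C[z]}$; I would check this is well-defined (sends the Rees module into the trivial module) and bijective directly from the formula. The equivariance is automatic since $z^p$ has degree $p$ and the $(p,q)$-eigenspace of $\widetilde{C_k^\infty(X,\C)\otimes\C[z]}$ under the relevant character matches up.

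Next I would chase the differential through $\alpha$. On $A_k$ the differential is $d_\xi = \xi_{\A^1}(d)$, i.e.\ the restriction of $d\otimes\Id$ on $C_\Cdot^\infty(X,\C)\otimes\C[z^{\pm1}]$; note $d = \del + \delbar$ raises the first bidegree index by $1$ and $0$ respectively. Conjugating by $\alpha$: an element $\omega^{p,q}z^{-p}$ (times a polynomial) maps under $\alpha_k$ to $\omega^{p,q}$; applying $d$ gives $\del\omega^{p,q} + \delbar\omega^{p,q}$ of bidegrees $(p{+}1,q)$ and $(p,q{+}1)$; applying $\alpha_{k+1}$ multiplies these by $z^{p+1}$ and $z^p$ respectively, i.e.\ by an extra factor $z$ on the $\del$ part. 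Hence the conjugated differential is exactly $d_z = z\cdot(\del\otimes\Id) + (\delbar\otimes\Id)$, as claimed. This is a bookkeeping computation with the grading shift, nothing subtle.

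For the first assertion, the key observation is that $\xi_{\A^1}$ is exact enough to commute with taking cohomology up to torsion. I would argue: the Rees construction $\xi_{\A^1}$ applied to the filtered complex $(C_\Cdot^\infty(X,\C),d,F)$ produces a complex of sheaves whose $k$-th cohomology sheaf, modulo torsion, is computed via Proposition \ref{maps between filtered spaces vs. maps between Rees-bundles}: the kernel of $\xi_{\A^1}(d)$ is $\xi_{\A^1}(\ker d)$ and the cokernel of $\xi_{\A^1}(d)$ maps onto $\xi_{\A^1}(\coker d)$ with torsion kernel, so $H^k(A_\Cdot,d_\xi)\bmod T \cong \xi_{\A^1}(H^k(C^\infty_\Cdot(X,\C),d))$ with the \emph{induced} filtration on de Rham cohomology — which is by definition the Hodge filtration $F$. (Here I use that $\ker/\im$ in filtered vector spaces, endowed with subquotient filtrations, is sent by the exact-on-kernels / exact-up-to-torsion-on-cokernels behavior of $\xi$ to the corresponding Rees sheaf; since $C_k^\infty$ is infinite-dimensional one works with the quasi-coherent version of the construction noted in the text after the Corollary, but all the cited statements are proved at the level of Rees modules and extend verbatim.) Finally, analytification is exact and commutes with $\xi_{\A^1}(\_)^{\an}$, so applying $(\_)^{\an}$ to both sides gives the stated identification $H^k(A_\Cdot^{\an},d_\xi^{\an})\bmod T\cong\xi_{\A^1}(H_{dR}^k(X,\C),F)^{\an}$.

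I expect the main obstacle to be the justification that forming cohomology of the Rees complex commutes, up to torsion, with the Rees functor when the underlying vector spaces are infinite-dimensional — i.e.\ making precise that Proposition \ref{maps between filtered spaces vs. maps between Rees-bundles} and the identification of the induced filtration on $H^k$ with the subquotient (Hodge) filtration survive in the quasi-coherent setting, and that the filtrations on $\ker d$, $\im d$, $H_{dR}^k$ are the ones actually appearing. The naturality/canonicity of $\alpha$ and the identification of $d_\xi$ with $d_z$ are, by contrast, routine once the grading conventions ($\deg dz_i = \deg z_i$, so the $\del$-part picks up a $z$) are fixed.
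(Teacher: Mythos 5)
Your proposal is correct and follows essentially the same route as the paper: the explicit trivialization $\alpha_k$ comes from the canonical splitting $C_k^\infty(X,\C)=\bigoplus_{p+q=k}C_{p,q}^\infty(X,\C)$ via Proposition \ref{characterisation splittable filtrations}, and the cohomology statement is exactly the application of Proposition \ref{maps between filtered spaces vs. maps between Rees-bundles} (kernels exactly, cokernels up to torsion). Your explicit check that conjugation by $\alpha$ turns $d_\xi$ into $z\del+\delbar$, and your flagging of the infinite-dimensional/quasi-coherent caveat, supply details the paper leaves implicit but do not change the argument.
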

\begin{proof}
The first part is a direct application of Proposition \ref{maps between filtered spaces vs. maps between Rees-bundles}, i.e. the fact that the Rees-construction commutes with kernels and commutes with cokernels up to torsion.
The isomorphism in the second part is the trivialization from Proposition \ref{characterisation splittable filtrations} (note that there is the canonical splitting of $F$ given by $C_k^\infty(X,\C)=\bigoplus_{p+q=k}C_{p+q}^\infty(X,\C)$).
\end{proof}
\begin{proof}[Proof of Theorem \ref{Popovici and Rees}]
Consider the holomorphic bundle $\ker\tilde{\Delta}_z$ as a subsheaf of $C_k^\infty(X,\C)\otimes_\C\Oh_{\C}$. By equation $(\ast\ast)$, it is contained in $\ker d_z\cap C_k^\infty(X,\C)\otimes_\C\Oh_{\C}\cong A_k\cap \ker d_\xi$ and therefore projects to $H^k\big(A_\Cdot^{\an},d_\xi^{\an}\big)\operatorname{ mod }T$. The result now follows from $(\ast)$ and Lemma \ref{Restriction properties of the affine Rees bundle}.
\end{proof}

We end this section and the article by sketching a few open ends and questions:

\begin{itemize}
\item \textbf{(Second filtration)} As we have seen, the FAVB has the metric independent definition as $\xi_{\A^1}(H_{dR}^k(X,\C),F)$. One minor disadvantage of this is that the resulting bundle does not see the real structure on $H_{dR}^k(X,\C)$. One could also take the conjugate Hodge-filtration into account and obtain a bundle on $\A^2$, given as $\xi_{\A^2}(H_{dR}^k(X,\C),F,\bar{F})$, which is now also equivariant with respect $\G_m^2$ and the antilinear involution on the base given by $(z_1,z_2)\mapsto (\bar{z}_2,\bar{z}_1)$ and hence descends to a real bundle on $\A^2_\R$. Remaining with the bundle on $\A^2$, its restriction to $\A^1\times\{h\}$ ($h\neq 0$) yields the FAVB, while its restriction to $\{h\}\times\A^1$ ($h\neq 0$) yields the analogous bundle for the conjugate spectral sequence. It is, similarly to Proposition \ref{base-change}, isomorphic to the cohomology (modulo Torsion) of the de-Rham complex with parameters $z_1,z_2$ and deformed differential, now given by $z_1\del+z_2\delbar$. The fibre of $\xi_{\A^2}(H_{dR}^k(X,\C),F,\bar{F})$ over $(0,0)$ is given by the space \[D=\bigoplus_{p,q\in\Z}\frac{F^p\cap\bar{F}^q}{F^{p+1}\cap\bar{F}^{q}+F^{p}\cap\bar{F}^{q+1}}\]
which measures the defect of the de-Rham cohomology being pure, i.e. $H_{dR}^k(X,\C)\cong \bigoplus_{p+q=k} F^p\cap \bar{F}^q$. Can one also describe this bundle, arising from two filtrations, as the kernel of a family of differential operators? In particular, is there a harmonic theory for (the bigraded components of) the space $D$?\\
One can also form $\xi_{\Pro^1}(H_{dR}^k(X,\C),F,\bar{F})$, i.e. descend the bundle to projective space. If the Hodge-filtrations induce a pure Hodge structure of some weight $k'$ (not necessarily $=k$) on $k$, this is a Twistor Structure in the sense of Simpson (after forgetting most of the action).

\item \textbf{(Relative version)} In \cite{popovici_adiabatic_2019} Popovici also considers the situation of a family of compact complex manifolds i.e. a proper holomorphic submersion $\pi:X\longrightarrow B$ and constructs a bundle $\cA^k$ on $\C\times B$, s.t. the restriction to each slice $\C\times \{b\}$ is the FAVB for $X_b=\pi^{-1}(b)$. Is there also a purely algebraic construction for this bundle? To make the Rees-bundle construction work in the relative setting, one maybe should answer the following question: Are the $F^pH^k_{dR}(X_b,\C)$ fibres of a coherent subsheaf of the (flat) holomorphic vector bundle $R^k\pi_*\C$?\\
It would also be interesting to consider the two-filtration version above in the relative setting.
\end{itemize}

\textbf{Acknowledgments:} This article was written at LMU M\"unchen. Part of the material is also contained in my PhD-thesis, elaborated at WWU M\"unster with financial support by the SFB 878. I warmly thank Christopher Deninger, J\"org Sch\"urmann and Dan Popovici for interesting and encouraging discussions and advice.
\bibliographystyle{acm}
\bibliography{toric}

\begin{thebibliography}{10}

\bibitem{atiyah_$k$-theory_1967}
{\sc Atiyah, M.~F.}
\newblock {\em $K$-theory}.
\newblock {W.A. Benjamin, Inc.}, 1967.

\bibitem{cox_toric_2011}
{\sc Cox, D.~A., Little, J.~B., and Schenck, H.~K.}
\newblock {\em Toric {{Varieties}}}.
\newblock No.~134 in Graduate {{Studies}} in {{Mathematics}}. {American
  Mathematical Soc.}, 2011.

\bibitem{deninger_$gamma$-factors_1991}
{\sc Deninger, C.}
\newblock On the $\gamma$-factors attached to motives.
\newblock {\em Inventiones mathematicae 104}, 2 (1991), 245--262.

\bibitem{deninger_$gamma$-factors_2001}
{\sc Deninger, C.}
\newblock On the $\gamma$-factors of motives ii.
\newblock {\em Documenta mathematica Vol. 6\/} (2001), 69--97.

\bibitem{goncharov_hodge_2008}
{\sc Goncharov, A.~B.}
\newblock Hodge correlators.
\newblock {\em arXiv:0803.0297 [math]\/} (Mar. 2008).

\bibitem{heinzner_geometric_1991}
{\sc Heinzner, P.}
\newblock {Geometric invariant theory on Stein spaces.}
\newblock {\em Mathematische Annalen 289}, 4 (1991), 631--662.

\bibitem{heinzner_equivariant_1995}
{\sc Heinzner, P., and Kutzschebauch, F.}
\newblock An equivariant version of {{Grauert}}'s {{Oka}} principle.
\newblock {\em Inventiones mathematicae 119}, 1 (1995), 317--346.

\bibitem{kaneyama_equivariant_1975}
{\sc Kaneyama, T.}
\newblock On equivariant vector bundles on an almost homogeneous variety.
\newblock {\em Nagoya Mathematical Journal 57\/} (1975), 65--86.

\bibitem{kaneyama_torus-equivariant_1988}
{\sc Kaneyama, T.}
\newblock Torus-equivariant vector bundles on projective spaces.
\newblock {\em Nagoya Mathematical Journal 111\/} (1988), 25--40.

\bibitem{kapranov_real_2012}
{\sc Kapranov, M.}
\newblock Real mixed {{Hodge}} structures.
\newblock {\em Journal of Noncommutative Geometry 6}, 2 (2012), 321--342.

\bibitem{klyachko_vector_1991}
{\sc Klyachko, A.}
\newblock Vector bundles and torsion free sheaves on the projective plane.
\newblock {\em MPIM Preprint\/} (1991).

\bibitem{klyachko_equivariant_1990}
{\sc Klyachko, A.~A.}
\newblock Equivariant bundles on toral varieties.
\newblock {\em Mathematics of the USSR-Izvestiya 35}, 2 (1990), 337.

\bibitem{penacchio_structures_2003}
{\sc Penacchio, O.}
\newblock {\em Structures de Hodge mixtes et fibrés sur le plan projectif
  complexe}.
\newblock PhD thesis, July 2003.

\bibitem{penacchio_mixed_2011}
{\sc Penacchio, O.}
\newblock Mixed {{Hodge}} structures and equivariant sheaves on the projective
  plane.
\newblock {\em Mathematische Nachrichten 284}, 4 (Mar. 2011), 526--542.

\bibitem{perling_graded_2004}
{\sc Perling, M.}
\newblock Graded rings and equivariant sheaves on toric varieties.
\newblock {\em Mathematische Nachrichten 263-264}, 1 (2004), 181--197.

\bibitem{popovici_adiabatic_2019}
{\sc Popovici, D.}
\newblock Adiabatic {{Limit}} and {{Deformations}} of {{Complex Structures}}.
\newblock {\em arXiv:1901.04087 [math]\/} (Jan. 2019).

\bibitem{segal_equivariant_1968}
{\sc Segal, G.}
\newblock Equivariant $k$-theory.
\newblock {\em Publications Mathématiques de l'IHÉS 34\/} (1968), 129--151.

\bibitem{simpson_hodge_1997}
{\sc Simpson, C.}
\newblock The {H}odge filtration on nonabelian cohomology.
\newblock {\em Algebraic geometry---{S}anta {C}ruz 1995 62\/} (1997), 217--281.

\bibitem{simpson_mixed_1997}
{\sc Simpson, C.}
\newblock Mixed twistor structures.
\newblock {\em arXiv:alg-geom/9705006\/} (May 1997).

\bibitem{zois_18_2010}
{\sc Zois, I.~P.}
\newblock 18 {{Lectures}} on {{K}}-{{Theory}}.
\newblock {\em arXiv:1008.1346 [math]\/} (Aug. 2010).

\end{thebibliography}
\end{document}